\theoremstyle{plain}
\newtheorem{theorem}{Theorem}[section]
\newtheorem{corollary}[theorem]{Corollary}
\newtheorem{conj}[theorem]{Conjecture}
\newtheorem{prop}[theorem]{Proposition}
\newcommand{\sr}[1]{\mathrm{SR}_{#1}}
\renewcommand{\leq}{\leqslant}
\renewcommand{\geq}{\geqslant}
\newcommand{\cart}{\ensuremath{\mathbin}{\square}}
\begin{document}

\title{On the flip graphs on perfect matchings of complete graphs and signed reversal graphs}
\author{Sebastian M. Cioab\u{a}\footnote{Department of Mathematical Sciences, University of Delaware, Newark, DE 19716-2553, USA {\tt cioaba@udel.edu}. This research has been partially supported by NSF grants DMS-1600768 and CIF-1815922 and a JSPS Invitational Fellowship for Research in Japan S19016.},\,\, Gordon Royle\footnote{Centre for the Mathematics of Symmetry and Computation, Dept. of Mathematics and Statistics, The University of Western Australia, {\tt gordon.royle@uwa.edu.au}} \, and Zhao Kuang Tan\footnote{Nanyang Technological University, Singapore {\tt zhaokuang.tan@ntu.edu.sg}. Part of this work was done while this author visited University of Delaware in 2019 supported by Nanyang Technological University through the CN Yang Scholars Programme.}}
\date{\today}
\maketitle

\begin{abstract}
In this paper, we study the flip graph on the perfect matchings of a complete graph of even order. We investigate its combinatorial and spectral properties including connections to the signed reversal graph and we improve a previous upper bound on its chromatic number.
\end{abstract}

\section{Introduction}

Our graph theory notation is standard (see \cite{BH} for example). When $G$ is a graph of even order $2n$, let $\mathcal{M}(G)$ be the graph whose vertices are the perfect matchings of $G$, and where two perfect matchings are adjacent if their symmetric difference is a cycle of length $4$. This is called the \emph{flip graph} of the set of matchings, because adjacent perfect matchings in $\mathcal{M}(G)$ are related by a \emph{flip move}, which replaces an independent pair of edges in a perfect matching with one of the two different independent pairs of edges on the same four vertices.

In this paper, we will be interested in the graph $\mathcal{M}(K_{2n})$, where $K_{2n}$ is the complete graph on $2n$ vertices. It is not hard to see that $\mathcal{M}(K_{2n})$ is a connected $n(n-1)$-regular graph on $(2n-1)!!=\prod_{j=1}^{n}(2j-1)$ vertices. We can understand the adjacency relation of this graph in terms of certain integer partitions. If $M$ and $M'$ are two perfect matchings in $K_{2n}$, then the multigraph union $M\cup M'$ is a disjoint union of cycles (where length $2$ cycles arise from edges in both $M$ and $M'$). The lengths $2\lambda_1\geq \cdots \geq 2\lambda_k$ of these cycles form a partition of $2n$. In this case $\lambda_1\geq \cdots \geq \lambda_k$ is a partition of $n$ which is usually written as $(\lambda_1,\dots,\lambda_k)\vdash n$. We call $(\lambda_1,\dots,\lambda_k)$ the \emph{partition type} of the pair $(M,M')$. So the matchings $M$ and $M'$ are adjacent in $\mathcal{M}(K_{2n})$ if the partition type of $(M,M')$ is $(2,1^{n-2})$. 

The spectral and combinatorial properties of the graph $\mathcal{M}(K_{2n})$ have been investigated by various authors. Diaconis and Holmes \cite{DH1} studied the connection between $\mathcal{M}(K_{2n})$ and phylogenetic trees. Using the theory of Gelfand pairs and representation theory of the symmetric group, Diaconis and Holmes \cite{DH2} determined the spectra of these graphs and showed that the mixing time of a random walk on $\mathcal{M}(K_{2n})$ exhibits cut-off phenomenon at $\frac{n\log(n)}{2}$ steps. The graph $\mathcal{M}(K_{2n})$ is part of the perfect matching association scheme whose graphs correspond to the integer partitions $(\lambda_1,\dots,\lambda_k)\vdash n$. This association scheme has interesting properties (see Godsil and Meagher \cite[Chapter 15]{GM} and Srinivasan \cite{Srinivasan}) and has been studied in the context of Erd\"{o}s-Ko-Rado theorems for matchings (see Godsil and Meagher \cite{GMpaper}, Lindzey \cite{Lindzey,Lindzey2,Lindzey3} or Ku and Wong \cite{KuWong}). Jennings \cite{Jennings} studied geodesics in the graph $\mathcal{M}(K_{2n})$ and showed that the distance between two perfect matchings $M_1$ and $M_2$ equals $n-c(M_1,M_2)$, where $c(M_1,M_2)$ equals the number of components in the graph $M_1\cup M_2$. Using this result, Jennings proved that $\mathcal{M}(K_{2n})$ has diameter $n-1$ and obtained a formula for the number of geodesics between any two perfect matchings. In particular, Jennings showed that the number of geodesics between any two vertices at distance $n-1$ equals $n^{n-2}$. Hernando, Hurtado and Noy \cite{HHN} studied a geometric version of the graph $\mathcal{M}(K_{2n})$ in which the vertex set consists of the perfect matchings on the nodes of a convex $2n$-gon whose edges are straight lines and do not cross. For $n\geq 2$, they observed that the number of vertices of this graph equals the Catalan number $C_n=\frac{{2n\choose n}}{n+1}$ and proved that this graph is bipartite of diameter $n-1$ with minimum degree and connectivity also equal to $n-1$. These authors also studied whether or not these graphs contain Hamiltonian paths or cycles. 

We use $\chi(G)$ to denote the chromatic number of a graph $G$. Fabila-Monroy, Flores-Penaloza, Huemer, Hurtado, Urrutia and Wood \cite{FFHHUW} observed that $\chi(\mathcal{M}(K_{n,n}))=2$ and using this result, proved that
\begin{equation}\label{upperbnd}
\chi(\mathcal{M}(K_{2n}))\leq 4n-4.
\end{equation}
In \cite{FFHHUW}, these authors also made the following conjecture.
\begin{conj}[Fabila-Monroy, Flores-Penaloza, Huemer, Hurtado, Urrutia and Wood \cite{FFHHUW}]\label{chiconj}
For $n\geq 2$, 
\begin{equation}\label{chiconj:eq}
\chi(\mathcal{M}(K_{2n}))=n+1.
\end{equation}
\end{conj}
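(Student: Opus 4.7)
The plan is to prove $\chi(\mathcal{M}(K_{2n}))=n+1$ by establishing matching upper and lower bounds.

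\textbf{Lower bound.} The first natural attempt, the Hoffman ratio bound $\chi\geq 1-n(n-1)/\lambda_{\min}$, turns out to be too weak. Using the spectrum of $\mathcal{M}(K_{2n})$ obtained in \cite{DH2}, one computes for $n=3$ the eigenvalues $\{6,1^{(9)},(-3)^{(5)}\}$, yielding only $\chi\geq 3$. The fractional chromatic number is similarly insufficient, since for $n\geq 3$ a $1$-factorization of $K_{2n}$ is an independent set of size $2n-1$, so by vertex-transitivity $\chi_f=|V|/\alpha\leq (2n-1)!!/(2n-1)$, which does not exceed $n+1$. A combinatorial obstruction is therefore required. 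The key structural observation is that $M,M'$ are adjacent iff $|M\cap M'|=n-2$, so an independent set in $\mathcal{M}(K_{2n})$ is a family of perfect matchings whose pairwise intersections lie in $\{0,1,\dots,n-3\}$. For $n=3$ this forces pairwise edge-disjointness, so every maximum independent set is one of the six $1$-factorizations of $K_6$, and any two of these share exactly one $1$-factor. Consequently the $15$ perfect matchings cannot be partitioned into three $1$-factorizations, so $\chi(\mathcal{M}(K_6))\geq 4$. For general $n$ the plan is to carry through an analogous design-theoretic obstruction by classifying the extremal independent sets and showing that any $n$-coloring would force an impossible global incidence structure.

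\textbf{Upper bound.} Here I would seek an explicit proper coloring with $n+1$ colors. A natural algebraic attempt is to label $V(K_{2n})$ by an abelian group $A$ of order $n+1$ and color $M$ by $\sum_{\{i,j\}\in M}\phi(i,j)\pmod{|A|}$ for a cleverly chosen $\phi$, arranged so that every flip $\{ab,cd\}\to\{ac,bd\}$ strictly changes the color; the required condition is
\[
\phi(a,c)+\phi(b,d)-\phi(a,b)-\phi(c,d)\neq 0 \text{ in } A \quad\text{for all distinct } a,b,c,d.
\]
An alternative is design-theoretic: combine several $1$-factorizations with carefully chosen residual independent sets to partition all $(2n-1)!!$ matchings into exactly $n+1$ independent classes.

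\textbf{Main obstacle.} The lower bound is the more delicate direction: standard spectral and fractional tools both undershoot $n+1$ already at $n=3$, so the proof must rest on a genuinely combinatorial analysis of the extremal independent sets of $\mathcal{M}(K_{2n})$, and generalizing the $n=3$ argument to arbitrary $n$ will be nontrivial because for $n\geq 4$ independent sets are no longer merely families of pairwise edge-disjoint matchings. The upper bound, while more concrete, also requires a nontrivial construction: symmetric choices such as $\phi(i,j)=i+j$ produce a vanishing difference under every flip, so a more intricate invariant, or a hybrid algebraic--combinatorial strategy, is likely needed.
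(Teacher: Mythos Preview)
The statement you are addressing is a \emph{conjecture}, not a theorem: the paper does not prove it and explicitly lists it as open in Section~\ref{sec:open}, noting that even the lower bound $\chi(\mathcal{M}(K_{2n}))\geq n+1$ is unresolved for $n=5$. So there is no ``paper's own proof'' to compare against, and your document is a research outline rather than a proof.

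On the \textbf{upper bound}: your algebraic scheme of coloring $M$ by $\sum_{\{i,j\}\in M}\phi(i,j)$ in an abelian group is precisely the mechanism the paper uses in Theorem~\ref{ourupperbnd}, with $\phi(i,j)=\sigma(i)\sigma(j)$ over $\mathrm{GF}(q)$. There the flip-difference factors as $(\sigma(a)-\sigma(d))(\sigma(c)-\sigma(b))$, which is nonzero by injectivity. The catch is that this needs $q\geq 2n+1$, so the method yields roughly $2n$ colors, not $n+1$. Your condition $\phi(a,c)+\phi(b,d)\neq\phi(a,b)+\phi(c,d)$ over a group of order $n+1$ is much stronger and you give no construction meeting it; the paper's experience suggests this algebraic route alone is unlikely to reach $n+1$.

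On the \textbf{lower bound}: your $n=3$ argument is essentially correct (for $K_6$, distinct perfect matchings share $0$ or $1$ edges, so independent sets are exactly the partial $1$-factorizations, $\alpha=5$, and a $3$-coloring would require three pairwise disjoint $1$-factorizations, which do not exist since each of the $15$ matchings lies in exactly two of the six $1$-factorizations). But your proposed extension to general $n$ is only a hope: for $n\geq 4$ independence no longer forces edge-disjointness, the maximum independent sets are not classified, and no concrete obstruction is offered. This is exactly the gap the paper identifies as the crux of the problem; proving even that $\chi(\mathcal{M}(K_{2n}))$ is strictly increasing in $n$ (which would suffice) remains open.
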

These authors confirmed their conjecture for $n\in \{2,3,4\}$ with the aid of a computer. We confirm these results by theoretical means in Section \ref{sec:chi}, and with extensive computation, we find a proper $6$-coloring of the graph $\mathcal{M}(K_{10})$ and a proper $7$-coloring of the graph $\mathcal{M}(K_{12})$\footnote{These colorings were obtained by computer and are available online at {\tt https://github.com/tanzkfp/FlipGraphsOnPerfectMatchings}}. We also obtain the following theoretical improvement of \eqref{upperbnd} in Section \ref{sec:chi}.
\begin{theorem}\label{ourupperbnd}
Let $n\geq 3$. If $q$ is the smallest prime power such that $q\geq 2n+1$, then 
$$\chi(\mathcal{M}(K_{2n}))\leq q.
$$
\end{theorem}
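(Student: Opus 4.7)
The plan is to exhibit an explicit proper coloring of $\mathcal{M}(K_{2n})$ whose color set is the finite field $\mathbb{F}_q$. Since $q$ is a prime power and $q \geq 2n+1$, there exist $2n$ distinct elements $a_1,\dots,a_{2n} \in \mathbb{F}_q$, and we use them to label the $2n$ vertices of $K_{2n}$. For each perfect matching $M$, we set
\[
c(M) \;=\; \sum_{\{v_i,v_j\}\in M} a_i a_j \;\in\; \mathbb{F}_q.
\]
This is well-defined on unordered edges and assigns to each vertex of $\mathcal{M}(K_{2n})$ a color from a set of size $q$, so it suffices to check that $c(M)\neq c(M')$ whenever $M$ and $M'$ are adjacent in $\mathcal{M}(K_{2n})$.

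To verify this, suppose $M$ and $M'$ are connected by a flip on four distinct vertices $v_i,v_j,v_k,v_l$; without loss of generality $\{v_i,v_j\},\{v_k,v_l\}\in M$ and $\{v_i,v_k\},\{v_j,v_l\}\in M'$. Then
\[
c(M')-c(M) \;=\; a_i a_k + a_j a_l - a_i a_j - a_k a_l \;=\; (a_i-a_l)(a_k-a_j),
\]
which is nonzero because $a_i,a_j,a_k,a_l$ are four distinct elements of the field $\mathbb{F}_q$. The remaining possibility, $\{v_i,v_l\},\{v_j,v_k\}\in M'$, factors as $(a_i-a_k)(a_l-a_j)$ and is also nonzero, again by the field property.

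The only real conceptual step, and the main point of the proof, is identifying the right symmetric function $g(a,b)$ to sum over edges of $M$. Separable choices of the form $g(a,b) = a^k + b^k$ cancel completely under every flip and give no information, so one is forced to use a genuinely mixed term, of which $g(a,b)=ab$ is the simplest. Its flip-difference then factors as a product of two label differences, and that product is guaranteed to be nonzero precisely because $\mathbb{F}_q$ is a field — this is exactly where we use that $q$ is a prime power, whereas the inequality $q\geq 2n+1$ is used only to guarantee enough distinct elements of $\mathbb{F}_q$ to label all $2n$ vertices.
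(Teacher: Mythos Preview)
Your proof is correct and is essentially identical to the paper's own argument: the paper also labels the vertices injectively by elements of $\mathrm{GF}(q)$, colors a matching by the sum of the products over its edges, and verifies properness by factoring the flip-difference as a product of two nonzero label differences. The added commentary on why a mixed term like $ab$ is needed is nice exposition but not part of the paper's proof.
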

In Section \ref{sec:partitions}, we investigate the structure of the graph $\mathcal{M}(K_{2n})$.  
Given a perfect matching $M$ of $K_{2n}$, we show that the graph induced by the perfect matchings at distance $n-1$ from $M$ is isomorphic to the signed reversal graph $\sr{n-1}$ on $n-1$ symbols. This graph has been well studied in discrete mathematics and molecular biology \cite{B,HP} and is related to pancake graphs \cite{GP}, burnt pancake graphs \cite{BC} and reversal graphs \cite{CT}. 

For a positive integer $k$, let $S_k$ denote the set of permutations of the set $\{1,\dots,k\}$, where a permutation $\sigma \in S_k$ is represented by the tuple $(\sigma(1), \sigma(2), \ldots, \sigma(k))$.
A \emph{signed permutation} of degree $k$ is a $k$-tuple $(\sigma_1,\dots,\sigma_k)$ of integers such that the $k$-tuple $(|\sigma_1|,\dots,|\sigma_k|)$ of absolute values is a permutation in $S_k$. In other words, a signed permutation is obtained from a permutation by negating some entries of the corresponding tuple. Let $S_k^{\pm}$ denote the set of all signed permutations of degree $k$. The \emph{signed reversal graph} $\sr{k}$ is the graph whose vertices are the signed permutations of degree $k$ where $(\sigma_1,\dots,\sigma_k)$ is adjacent to $(\tau_1,\dots,\tau_k)$ if there exist $1\leq i\leq j\leq k$ such that 
\begin{equation}\label{sradj}
(\tau_1,\dots,\tau_{i-1},\tau_i,\dots,\tau_j,\tau_{j+1},\dots,\tau_k)=(\sigma_1,\dots,\sigma_{i-1},-\sigma_j,\dots,-\sigma_i,\sigma_{j+1},\dots,\sigma_k).
\end{equation}

For example, when $k=5$, the vertex $(2,-3,1,4,-5)$  is adjacent to $(-2,-3,1,4,-5)$ (take $i=j=1$ in \eqref{sradj}) and also to $(2,-3,1,5,-4)$ (take $i=4$, $j=5$). For simplicity of notation, we will denote the sign of each entry as an exponent so $(2,-3,1,4,-5)$ is the same as $(2^{+},3^{-},1^{+},4^{+},5^{-})$ or just $2^{+}3^{-}1^{+}4^{+}5^{-}$. The graph $\sr{k}$ has $2^k\cdot k!$ vertices and is regular of valency ${k+1\choose 2}$. In Section \ref{sec:partitions}, we describe the connections between the flip graph $\mathcal{M}(K_{2n})$ and the signed reversal graph $\sr{n-1}$. In particular, we show that $\mathcal{M}(K_{2n})$ has an equitable partition in which the subgraph induced by each cell is the disjoint union of isomorphic signed reversal graphs or the disjoint union of isomorphic Cartesian products of signed reversal graphs. We use this partition to show that $\chi(\mathcal{M}(K_{2n}))\leq \chi(\sr{n-1})+\chi(\sr{n-2})$. In Section \ref{sec:signreversal}, we determine some of the eigenvalues of the signed reversal graphs.
We finish the paper with some open problems in Section \ref{sec:open}.

\section{The eigenvalues of $\mathcal{M}(K_{2n})$}\label{sec:eigflip}

Let $M$ be a perfect matching of $K_{2n}$. The subgroup of the symmetric group $S_{2n}$ fixing $M$ is denoted by $H_n$ and is known as the hyperoctahedral group of degree $n$. It is isomorphic to the wreath product $S_2 \wr S_n$ and has order $2^n\cdot n!$. The graph $\mathcal{M}(K_{2n})$ can be identified with the quotient $S_{2n}/H_n$ and is a part of the perfect matching association scheme (see Godsil and Meagher \cite[Section 15.4]{GM}). This is useful due to the decomposition of $\mathcal{L}(\mathcal{M}(K_{2n}))=\{f:V(\mathcal{M}(K_{2n}))\rightarrow \mathbb{C}\}$ into irreducible representations.
\begin{theorem}[see Saxl \cite{saxl_1981} or Thrall \cite{Thrall}]
If $\mathcal{M}(K_{2n}) = S_{2n}/H_n$, then
\begin{equation}
\mathcal{L}(\mathcal{M}(K_{2n}))\cong \bigoplus_{\lambda\, \vdash\,n} S^{2\lambda},
\end{equation} 
where the direct sum is over all partitions $\lambda$ of $n$, $2\lambda = (2\lambda_1, 2\lambda_2,\cdots, 2\lambda_k)$ and $S^{2\lambda}$ is the associated irreducible representation of the symmetric group $S_{2n}$.
\end{theorem}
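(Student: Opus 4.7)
The plan is to recognize $\mathcal{L}(\mathcal{M}(K_{2n}))$ as the permutation module of $S_{2n}$ acting on the cosets $S_{2n}/H_n$, i.e.\ as the induced representation $\mathrm{Ind}_{H_n}^{S_{2n}}(\mathbf{1})$, since the action of $S_{2n}$ on perfect matchings of $K_{2n}$ is transitive with point stabilizer $H_n$. With this identification in hand, Frobenius reciprocity reduces the claim to showing that for every partition $\mu\vdash 2n$, the multiplicity of $S^\mu$ in $\mathrm{Ind}_{H_n}^{S_{2n}}(\mathbf{1})$, which equals $\dim (S^\mu)^{H_n}$, is equal to $1$ when $\mu=2\lambda$ for some $\lambda\vdash n$ and is equal to $0$ otherwise. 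This is the classical statement that $(S_{2n},H_n)$ is a Gelfand pair whose spherical representations are exactly the $S^{2\lambda}$.

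I would verify this by passing to the ring of symmetric functions via the Frobenius characteristic map $\mathrm{ch}$. The induced character $\mathrm{Ind}_{H_n}^{S_{2n}}(\mathbf{1})$ is a sum over conjugacy classes of $H_n$, and each element of $H_n\cong S_2\wr S_n$ is a pair $(\sigma,\epsilon)\in S_n\times\{\pm1\}^n$ whose cycle type on the $2n$ points is easily described: a positive cycle of $\sigma$ of length $\ell$ (one whose sign product is $+$) contributes two cycles of length $\ell$, while a negative cycle of length $\ell$ contributes one cycle of length $2\ell$. A direct bookkeeping then shows
\[
\mathrm{ch}\bigl(\mathrm{Ind}_{H_n}^{S_{2n}}(\mathbf{1})\bigr) \;=\; \prod_{i}\frac{1}{1-x_i^2}\,\prod_{i<j}\frac{1}{1-x_ix_j}.
\]

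The final step is to invoke the Littlewood identity, which expands the right-hand side as $\sum_{\lambda\vdash n} s_{2\lambda}$; equivalently, one can quote the formula for the number of semistandard tableaux of shape $2\lambda$ in terms of pairings. Since $\mathrm{ch}(S^{2\lambda}) = s_{2\lambda}$ and the Schur functions are linearly independent, matching coefficients yields the desired decomposition with multiplicity one on each summand.

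The main obstacle is the symmetric-function identity in the last step: the cycle-index computation is routine combinatorial bookkeeping, but recognizing the generating function as $\sum_{\lambda\vdash n}s_{2\lambda}$ requires the Littlewood identity (or, alternatively, a bijective/RSK argument matching pairs of rooted objects with semistandard tableaux of even shape). If one prefers to avoid Littlewood, an equivalent route is to exhibit, for each $\lambda\vdash n$, a nonzero $H_n$-invariant vector in $S^{2\lambda}$ (e.g.\ via a distinguished polytabloid or the Young symmetrizer applied to a tableau pairing rows into blocks) and then to use a dimension count coming from the total number of perfect matchings $(2n-1)!! = \sum_{\lambda\vdash n}\dim S^{2\lambda}$ to conclude that no other irreducibles occur and that each $S^{2\lambda}$ occurs exactly once.
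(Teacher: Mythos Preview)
The paper does not supply its own proof of this theorem; it is stated with attribution to Saxl and Thrall and then used as a black box. So there is nothing in the paper to compare your argument against.

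That said, your outline is a correct and standard route to the result. A few remarks on the details:

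\begin{itemize}
\item Your identification of $\mathcal{L}(\mathcal{M}(K_{2n}))$ with $\mathrm{Ind}_{H_n}^{S_{2n}}(\mathbf{1})$ and the use of Frobenius reciprocity are exactly right. The cycle-type analysis of elements of $H_n\cong S_2\wr S_n$ is also correct: a cycle of $\sigma$ of length $\ell$ with positive sign product contributes $p_\ell^2$ and with negative sign product contributes $p_{2\ell}$, so the Frobenius characteristic is the degree-$2n$ piece of $\prod_{i\le j}(1-x_ix_j)^{-1}$, i.e.\ the plethysm $h_n[h_2]$.
\item The identity you need at the end is indeed Littlewood's identity
\[
\prod_{i\le j}\frac{1}{1-x_ix_j}\;=\;\sum_{\mu:\ \text{all parts of }\mu\text{ even}} s_\mu,
\]
whose degree-$2n$ part is $\sum_{\lambda\vdash n} s_{2\lambda}$. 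This is the only nontrivial input, as you note.
\item Your alternative route also works, but be aware that to avoid circularity you need the Gelfand-pair property (multiplicity $\le 1$) proved independently, e.g.\ via the ``twisted involution'' criterion showing every $(H_n,H_n)$ double coset is self-inverse. Once that is in hand, exhibiting a nonzero $H_n$-fixed vector in each $S^{2\lambda}$ together with the bijective identity $(2n-1)!!=\sum_{\lambda\vdash n}\dim S^{2\lambda}$ (standard Young tableaux of even-row shape $\leftrightarrow$ fixed-point-free involutions) forces the decomposition.
\end{itemize}
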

This theorem has been used to obtain the spectrum of the graph $\mathcal{M}(K_{2n})$.
\begin{theorem}[see Diaconis-Holmes \cite{DH2} or Chapter~7 in MacDonald \cite{MacDonald}]
The graph $\mathcal{M}(K_{2n})$ has an eigenvalue $\beta_\lambda$ for each partition $\lambda=(\lambda_1,\lambda_2,\ldots,\lambda_k)$ of $n$, given by
\begin{equation}\label{betalambda}
\beta_\lambda = \sum_{j=1}^k \lambda_j(\lambda_j-j).
\end{equation}
The multiplicity of $\beta_\lambda$ is determined by the partition $\mu=2\lambda$, and is given by
$$\text{mult}(\beta_{\lambda})=\frac{(2n)!}{\prod_{(i,j)\in\mu}h(i,j)},$$
with the product being over the cells of the Young diagram for $\mu$, and the hook length $h(i,j)=\mu_i+\mu_j'-i-j+1$, where $\mu'$ is the transposed diagram.
\end{theorem}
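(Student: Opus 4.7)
The overall plan is to exploit the fact that $(S_{2n}, H_n)$ is a Gelfand pair, so the decomposition of the preceding theorem is multiplicity-free and each irreducible $S^{2\lambda}$ is a full eigenspace of the adjacency operator $A$ of $\mathcal{M}(K_{2n})$. Once the scalar by which $A$ acts on $S^{2\lambda}$ has been identified, the multiplicity statement follows at once from multiplicity one together with the hook length formula $\dim S^{2\lambda} = (2n)!/\prod_{(i,j)\in 2\lambda} h(i,j)$.

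To compute the scalar I would translate the flip adjacency into the group algebra $\mathbb{C}[S_{2n}]$. Fix a matching $M$ with stabiliser $H_n$. Every flip neighbour of $M$ has the form $\tau M$ for some transposition $\tau \in S_{2n}\setminus H_n$; a direct count (a flip at two specified edges of $M$ is realised by exactly two of the four transpositions joining those edges) shows that each of the $n(n-1)$ neighbours is hit by exactly two of the $2n(n-1)$ transpositions outside $H_n$. Writing $T$ for the conjugacy class of all transpositions in $S_{2n}$ and $C_T=\sum_{\tau\in T}\tau$ for the associated class sum, and noting that the $n$ transpositions lying in $H_n$ (those swapping the ends of an edge of $M$) fix the coset $[M]$, one obtains the identity
\begin{equation*}
C_T \cdot [M] \;=\; 2\,A\cdot[M] \;+\; n\,[M]
\end{equation*}
in the permutation module $\mathbb{C}[S_{2n}/H_n]$.

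Next I would invoke the classical fact that the class sum $C_T$ acts on any irreducible $S^{\mu}$ by the central character $\omega_{\mu}(C_T) = \sum_{(i,j)\in\mu}(j-i)$, the content sum of $\mu$ (this follows, for instance, from the theory of Jucys--Murphy elements). Specialising to $\mu=2\lambda$, the $i$-th row of $2\lambda$ has length $2\lambda_i$ and contributes $\sum_{j=1}^{2\lambda_i}(j-i)=\lambda_i(2\lambda_i+1-2i)$; summing over $i$ and using $\sum_i\lambda_i=n$ gives $\omega_{2\lambda}(C_T) = 2\sum_{i} \lambda_i(\lambda_i-i) + n$. Feeding this back into the displayed identity and cancelling the $n$ term yields $\beta_{\lambda} = \sum_{j}\lambda_j(\lambda_j-j)$, as required.

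The main technical input is the content-sum formula for the central character of the transposition class; everything else is combinatorial bookkeeping. The step I expect to be most error-prone is the double-count verifying that each flip neighbour corresponds to exactly two external transpositions while the $n$ internal transpositions produce the $+n$ correction; once that is clean, the rest is algebra.
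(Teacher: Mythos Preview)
Your argument is correct. The paper itself does not supply a proof of this theorem; it simply quotes it from Diaconis--Holmes and Macdonald, so there is no in-paper proof to compare against line by line.

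That said, your route is somewhat different in spirit from the cited sources. Diaconis--Holmes work through the machinery of Gelfand pairs and zonal spherical functions for $(S_{2n},H_n)$, while Macdonald's treatment goes via symmetric functions and zonal polynomials. Your approach sidesteps both: once multiplicity-freeness guarantees that $A$ is scalar on each $S^{2\lambda}$, you reduce everything to the single identity $C_T = 2A + nI$ on $\mathbb{C}[S_{2n}/H_n]$ and then read off the scalar from the content-sum formula for the central character of the transposition class. This is more elementary and more direct; the only representation-theoretic input beyond Schur's lemma is the standard fact that $C_T$ acts on $S^{\mu}$ by $\sum_{(i,j)\in\mu}(j-i)$.

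Two small remarks. First, your double-count is right: the four transpositions on the vertices of two fixed edges $\{a,c\},\{b,d\}$ split as $(a\,b),(c\,d)$ producing one flip and $(a\,d),(b\,c)$ producing the other, so each neighbour is hit twice, and the $n$ internal transpositions (the only transpositions in $H_n$) each fix the coset. Second, when you say ``each $S^{2\lambda}$ is a full eigenspace'', bear in mind---as the paper notes immediately after the theorem---that distinct $\lambda$ can yield the same $\beta_\lambda$, so the genuine eigenspace for a numerical eigenvalue may be a direct sum of several $S^{2\lambda}$. This does not affect your proof, since the theorem is phrased partition-by-partition, but the wording ``full eigenspace'' is slightly loose.
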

Note that different partitions $\lambda$ may give the same eigenvalue $\beta_{\lambda}$ and in such situations, the multiplicity of that eigenvalue would be the sum of the given multiplicities, taken over all the $\lambda$ that produce that eigenvalue.

The smallest eigenvalue can be determined easily.
\begin{corollary}
The smallest eigenvalue of the graph $\mathcal{M}(K_{2n})$ is $-{n\choose 2}$. 
\end{corollary}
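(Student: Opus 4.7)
The plan is first to exhibit a partition of $n$ that achieves the value $-\binom{n}{2}$, and then to show that no other $\lambda \vdash n$ gives a smaller $\beta_\lambda$.

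The natural candidate is $\lambda = (1^n)$: this partition has $n$ parts all equal to $1$, so formula \eqref{betalambda} gives
$$\beta_{(1^n)} = \sum_{j=1}^n 1\cdot(1-j) = n - \tfrac{n(n+1)}{2} = -\binom{n}{2}.$$
So it remains to prove $\beta_\lambda \geq -\binom{n}{2}$ for every $\lambda = (\lambda_1, \ldots, \lambda_k) \vdash n$.

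For the lower bound I would proceed by direct algebraic manipulation. Writing $\beta_\lambda = \sum_i \lambda_i^2 - \sum_i i\lambda_i$ and applying the three elementary identities
$\sum_i \lambda_i^2 = n + 2\sum_i \binom{\lambda_i}{2}$,
$\sum_i i\lambda_i = n + \sum_i (i-1)\lambda_i$, and
$\binom{n}{2} = \binom{\sum_i \lambda_i}{2} = \sum_i \binom{\lambda_i}{2} + \sum_{i<j}\lambda_i\lambda_j$,
one can substitute and simplify to obtain
$$\beta_\lambda + \binom{n}{2} = 3\sum_i \binom{\lambda_i}{2} + \Bigl(\sum_{i<j}\lambda_i\lambda_j - \sum_i (i-1)\lambda_i\Bigr).$$
The first sum on the right is obviously non-negative. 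For the parenthesized expression, every part satisfies $\lambda_i \geq 1$, so $\sum_{i<j}\lambda_i\lambda_j \geq \sum_{i<j}\lambda_j = \sum_j (j-1)\lambda_j$; hence the bracket is also $\geq 0$. Combining these bounds yields $\beta_\lambda \geq -\binom{n}{2}$, completing the corollary.

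I do not anticipate a genuine obstacle: the whole argument reduces to spotting the correct rearrangement of $\beta_\lambda + \binom{n}{2}$ as a sum of manifestly non-negative pieces. An alternative approach would be a local-move argument, showing that replacing $\lambda$ by the partition obtained by removing a box from row $1$ and appending a new row of size $1$ strictly decreases $\beta$, and iterating down to $(1^n)$; however, keeping the parts in non-increasing order after each move is fiddly, so the purely algebraic route seems cleaner. Finally, the estimates above also identify $(1^n)$ as the unique minimizer, though this stronger statement is not needed for the corollary.
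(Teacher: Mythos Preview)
Your proof is correct, but it differs from the paper's. The paper simply invokes the known fact (citing \cite[p.382]{CST}) that $\beta_\lambda$ is monotone increasing with respect to the majorization (dominance) order on partitions of $n$; since $(1^n)$ is the unique minimum of that partial order, $\beta_{(1^n)}$ is automatically the smallest eigenvalue. Your argument instead gives a direct, self-contained algebraic verification of the single inequality $\beta_\lambda \ge -\binom{n}{2}$ via the decomposition
\[
\beta_\lambda+\binom{n}{2}=3\sum_i\binom{\lambda_i}{2}+\Bigl(\sum_{i<j}\lambda_i\lambda_j-\sum_i(i-1)\lambda_i\Bigr)\ge 0,
\]
with the parenthesized term handled by $\lambda_i\ge 1$. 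The trade-off is clear: the paper's route is a one-line citation but imports a stronger structural result from the literature, whereas your route is elementary and needs no outside reference, at the cost of proving only what is required (the minimum, not the full monotonicity). Both are perfectly valid for the corollary as stated.
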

\begin{proof}
Since the eigenvalues $\beta_{\lambda}$ described by \eqref{betalambda} are increasing with respect to the majorization order on partitions (see \cite[p.382]{CST} for a proof) the smallest eigenvalue corresponds to the partition $(1^n)$ and equals
\begin{equation*}
\beta_{(1^n)}=\sum_{j=1}^n1(1-j)=-{n\choose 2}.
\end{equation*}
\end{proof}

If $G$ is an undirected non-empty graph whose adjacency matrix eigenvalues are $\theta_1\geq \dots \geq \theta_{\min}$, 
then Hoffman \cite{Hoffman} (see also \cite[Theorem 3.6.2]{BH}) proved that
\begin{equation}\label{hoff}
\chi(G)\geq 1+\frac{\theta_1}{|\theta_{\min}|}.
\end{equation}
Applying this bound for $\mathcal{M}(K_{2n})$, we get that $\chi(\mathcal{M}(K_{2n}))\geq 1+\frac{n(n-1)}{{n\choose 2}}=3$. This is certainly true, although not as strong as a bound as we were hoping for. This lower bound can be also deduced combinatorially since the subgraph induced by the neighborhood of any vertex of $\mathcal{M}(K_{2n})$ is a perfect matching with ${n\choose 2}$ edges. Note that this is worse than the lower bound 
$$\chi(\mathcal{M}(K_{2n}))\geq \chi(\mathcal{M}(K_{8}))=5,$$
for any $n\geq 4$, which is obtained from the observation that $\chi(\mathcal{M}(K_{2n}))$ is non-decreasing with $n$ (as $\mathcal{M}(K_{2n-2})$ is an induced subgraph of $\mathcal{M}(K_{2n})$).

The Lov\'{a}sz theta function of the complement of a graph $\overline{H}$ provides a stronger lower bound for the chromatic number of $H$ than Hoffman's ratio bound \eqref{hoff} (see \cite[Theorem 6]{Lovasz} and the Sandwich theorem \cite{Knuth}). Our computations for small values of $n$ seem to indicate that this lower bound also equals $3$ for $\mathcal{M}(K_{2n})$.

\section{Some structural properties of $\mathcal{M}(K_{2n})$}\label{sec:partitions}

In this section, we investigate the structure of $\mathcal{M}(K_{2n})$ and  explain the connection with signed reversal graphs. This will be useful in providing upper bounds for the chromatic number of $\mathcal{M}(K_{2n})$ for small $n$. We label the vertices of $K_{2n}$ by $0^{+},0^{-},1^{+},1^{-},\dots,(n-1)^{+},(n-1)^{-}$, and distinguish a particular perfect matching, namely
\[
M_0 = \{0^{+},0^{-}\},\dots,\{(n-1)^{+},(n-1)^{-}\},
\]
as the \emph{identity perfect matching}.

If $M$ is an arbitrary perfect matching of $K_{2n}$ then the multigraph union $M_0 \cup M$ is the disjoint union of cycles of even lengths, say $2\lambda_1\geq 2\lambda_2\geq \dots \geq 2\lambda_k$ (for some $k$), whose lengths add up to $2n$. Dividing by $2$ yields a partition $(\lambda_1,\cdots,\lambda_k)\vdash n$, which we call the \emph{type} of the matching $M$. Note that $(1^n)$ is the type of the identity perfect matching. 

\begin{figure}[h]
    \centering

    
\begin{minipage}[b]{0.25\textwidth}
\begin{center}
\begin{tikzpicture}[edge_style/.style={color=blue, line width=1.3pt}, identity_style/.style= {color=red, line width=1.3pt}, vertex/.style = {circle,fill=black,minimum size=5pt,inner sep=1pt}] 
   \node[vertex,label=left:$0^{+}$] (G_1) at (0,1){} ;
  \node[vertex,label=left:$1^{+}$] (G_2) at (0,0) {} ;
  \node[vertex,label=left:$2^{+}$] (G_3) at (0,-1){};
  \node[vertex,label=right:$2^-$] (G_4) at (1,-1){};
  \node[vertex,label=right:$1^-$] (G_5) at (1,0){};
  \node[vertex,label=right:$0^-$] (G_6) at (1,1){};
  \draw[identity_style] (G_1) edge (G_6);
  \draw[identity_style] (G_5) edge (G_2);
  \draw[identity_style] (G_3) edge (G_4);
 \draw[edge_style]  (G_1) edge (G_2);
 \draw[edge_style]  (G_5) edge (G_4);
 \draw[edge_style]  (G_6) edge (G_3);
\end{tikzpicture}
\end{center}
\caption{Label $(1^{-}2^{+})$ Type $(3)$}
\label{examples1}
\end{minipage}
 \hspace{1cm}
\begin{minipage}[b]{0.25\textwidth}
\begin{center}
\begin{tikzpicture}[edge_style/.style={color=blue, line width=1.3pt}, identity_style/.style= {color=red, line width=1.3pt}, vertex/.style = {circle,fill=black,minimum size=5pt,inner sep=1pt}] 
  \node[vertex,label=left:$0^{+}$] (G_1) at (0,1){} ;
  \node[vertex,label=left:$1^{+}$] (G_2) at (0,0) {} ;
  \node[vertex,label=left:$2^{+}$] (G_3) at (0,-1){};
  \node[vertex,label=right:$2^-$] (G_4) at (1,-1){};
  \node[vertex,label=right:$1^-$] (G_5) at (1,0){};
  \node[vertex,label=right:$0^-$] (G_6) at (1,1){};
  \draw[edge_style]  (G_1) edge[bend right] (G_6);
 \draw[edge_style]  (G_2) edge (G_4);
 \draw[edge_style]  (G_5) edge (G_3);
   \draw[identity_style] (G_1) edge[bend left] (G_6);
  \draw[identity_style] (G_5) edge (G_2);
  \draw[identity_style] (G_3) edge (G_4);

\end{tikzpicture}\end{center}
    \caption{Label $()(2^{+})$ Type $(2,1)$}
    \label{examples2}
    \end{minipage}
\end{figure}

In this multigraph $M_0 \cup M$ color the edges from the identity matching in red, and those from the matching $M$ in blue. Now consider a walk starting from $0^+$ and alternating blue and red edges until the walk returns to $0^+$, noting down the sequence formed by the vertices at the beginning of each blue edge (other than the first). If there are cycles that have not yet been traversed, repeat this process by starting with the next smallest unused positive number. We note that by fixing the start point of each cycle, this representation is unique. Examples of labels and types for two matchings are shown in Figure \ref{examples1} and \ref{examples2}.

Given a graph $H=(V,E)$, a partition $V=X_1\cup \dots \cup X_t$ of its vertex set is called \textit{equitable} if there exist non-negative integers $b_{i,j}$ for $1\leq i,j\leq t$ such that for any $1\leq i,j\leq t$ and for any vertex $x\in X_i$, the number of neighbors of $x$ that are contained in $X_j$ equals $b_{i,j}$. The $t\times t$ matrix $B=(b_{i,j})_{1\leq i,j\leq t}$ is called the \emph{quotient matrix} of the partition. In general, $B$ is not symmetric, but it is always diagonalizable, and its spectrum is contained in the spectrum of the adjacency matrix of $H$ (see \cite[Section 2.3]{BH} or \cite[Chapter 5]{Godsil} for example). It is well-known that if $\Gamma$ is a group of automorphisms of $H$, then the orbits of $\Gamma$ form an equitable partition of $V(H)$ (see \cite[p.76]{Godsil}).
\begin{prop}
The partition of the vertices of $\mathcal{M}(K_{2n})$ according to their type is equitable.
\end{prop}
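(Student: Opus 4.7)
The plan is to exhibit a group of automorphisms of $\mathcal{M}(K_{2n})$ whose orbit partition on the vertex set coincides with the partition by type, and then invoke the standard fact, already cited in the excerpt, that orbits of an automorphism group form an equitable partition.

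First, I observe that the symmetric group $S_{2n}$ acts on the perfect matchings of $K_{2n}$ by permuting vertices of $K_{2n}$, and this action preserves symmetric differences and hence cycle structure of multigraph unions. In particular, it preserves the flip adjacency, so $S_{2n}$ embeds into $\mathrm{Aut}(\mathcal{M}(K_{2n}))$. Passing to the subgroup $H_n$ stabilizing $M_0$ gives a group of automorphisms of $\mathcal{M}(K_{2n})$ that fixes the vertex $M_0$.

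The key step is to verify that the orbits of $H_n$ on $V(\mathcal{M}(K_{2n}))$ are precisely the type classes. One inclusion is immediate: if $\sigma \in H_n$ and $M' = \sigma(M)$, then $M_0 \cup M' = \sigma(M_0) \cup \sigma(M) = \sigma(M_0 \cup M)$, so $\sigma$ maps the cycles of $M_0 \cup M$ to those of $M_0 \cup M'$ preserving their lengths, and hence $M$ and $M'$ share the same type. For the converse, suppose $M$ and $M'$ both have type $(\lambda_1,\dots,\lambda_k)$. I pair up the cycles of $M_0 \cup M$ with those of $M_0 \cup M'$ by equal length, and within each matched pair of cycles of length $2\lambda_i$ I choose an isomorphism that sends $M_0$-edges to $M_0$-edges (this is possible since each such cycle alternates between $M_0$-edges and $M$- or $M'$-edges). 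Glueing these local bijections produces a permutation $\sigma \in S_{2n}$ fixing $M_0$ and satisfying $\sigma(M)=M'$, so $M$ and $M'$ lie in the same $H_n$-orbit.

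Combining the two steps above with the cited result that orbits of an automorphism group form an equitable partition (\cite[p.~76]{Godsil}) yields the proposition, and the quotient matrix $B=(b_{i,j})$ is indexed by partitions of $n$. The only nontrivial point is the converse half of the orbit-equals-type verification, but it amounts to nothing more than carefully matching up cycles of the two multigraphs respecting the $M_0$-colouring; no real computation is needed and no spectral machinery is invoked at this stage.
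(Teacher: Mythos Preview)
Your proof is correct and follows essentially the same approach as the paper: both identify the type partition with the orbit partition of the hyperoctahedral group $H_n$ acting on $\mathcal{M}(K_{2n})$ and then invoke the standard fact that orbit partitions are equitable. The only difference is that you spell out the argument that $H_n$-orbits coincide with type classes (including the converse direction via matching up cycles), whereas the paper simply asserts this with a reference to \cite[Chapter~11]{CST}.
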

\begin{proof}
The subgroup of $S_{2n}$ fixing the identity perfect matching is a subgroup of the automorphism group of $\mathcal{M}(K_{2n})$. The partition of the matchings according to their types is the partition of the vertex set of $\mathcal{M}(K_{2n})$ into the orbits of this subgroup  (see also \cite[Chapter 11]{CST}) which is equitable by the previous paragraph.
\end{proof}

\begin{prop}\label{3types}
If $M$ is a perfect matching of $K_{2n}$ with type $(\lambda_1,\dots,\lambda_k)$ and $M'$ is a neighbor of $M$, then the type of $M'$ is one of the following:
\begin{enumerate}
\item $(\lambda_1,\dots,\lambda_k)$,
\item $(\mu_1,\dots,\mu_{k-1})$ which is obtained from $(\lambda_1,\dots,\lambda_k)$ by combining two parts $\lambda_i$ and $\lambda_j$ into one part $\lambda_i+\lambda_j$ and leaving the remaining parts unchanged,
\item $(\nu_1,\dots,\nu_{k},\nu_{k+1})$ which is obtained from $(\lambda_1,\dots,\lambda_k)$ by breaking one part $\lambda_{\ell}$ into two smaller parts $\lambda'_{\ell}$ and $\lambda''_{\ell}$ and leaving the remaining parts unchanged.
\end{enumerate}
\end{prop}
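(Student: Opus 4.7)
The plan is to analyze how the cycle decomposition of the multigraph $M_0 \cup M$ changes under a single flip move. Since $M'$ is adjacent to $M$ in $\mathcal{M}(K_{2n})$, their symmetric difference is a $4$-cycle on some four vertices $a,b,c,d$; up to relabelling we may assume $\{a,b\}, \{c,d\} \in M$ and $\{a,c\}, \{b,d\} \in M'$ (the other pairing is handled symmetrically). Hence $M_0 \cup M'$ differs from $M_0 \cup M$ only by swapping two blue ($M$-)edges for two new blue edges on the same four-vertex set, while every red ($M_0$-)edge is left in place. Reading off the type of $M'$ therefore reduces to tracking what happens to the cycles of $M_0 \cup M$ containing the two edges $\{a,b\}$ and $\{c,d\}$.

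First I would split into two cases according to whether $\{a,b\}$ and $\{c,d\}$ lie in two distinct cycles of $M_0 \cup M$ or in the same cycle. In the distinct-cycles case, say of lengths $2\lambda_i$ and $2\lambda_j$, deleting the two blue edges yields two paths (from $a$ to $b$ and from $c$ to $d$), each with red end-edges; either choice of reconnecting blue edges on $\{a,b,c,d\}$ concatenates these paths into a single alternating red-blue cycle of length $2(\lambda_i+\lambda_j)$. All other cycles of $M_0 \cup M$ are untouched, so the resulting type is obtained from $(\lambda_1,\dots,\lambda_k)$ by combining the parts $\lambda_i$ and $\lambda_j$ into one part $\lambda_i+\lambda_j$, which is case (2).

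Next I would treat the same-cycle case. Here removing the two blue edges from a single cycle of length $2\lambda_\ell$ breaks it into two paths, each with red end-edges. One of the two ways to reinsert a pair of blue edges on $\{a,b,c,d\}$ reconnects the paths into a single alternating cycle of the same length $2\lambda_\ell$, so the type of $M'$ equals the type of $M$, giving case (1); the other reinsertion closes each path into its own alternating cycle, producing two cycles whose half-lengths $\lambda_\ell'$ and $\lambda_\ell''$ satisfy $\lambda_\ell' + \lambda_\ell'' = \lambda_\ell$, which is case (3). Degenerate situations — when one of the new cycles has length $2$ because an inserted blue edge coincides with a red edge — are handled uniformly under the convention that edges lying in both $M_0$ and $M'$ contribute length-$2$ cycles.

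The main obstacle is really just bookkeeping: one must check that in each subcase the new edge set on the affected vertices still forms cycles that alternate red and blue. This follows because the removed edges were blue, so the broken paths end in red edges, and the two inserted blue edges then interleave correctly regardless of the pairing. Once this verification is made, the three listed outcomes exhaust all possibilities for the type of $M'$, proving the proposition.
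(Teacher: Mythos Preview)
Your proof is correct and follows essentially the same approach as the paper's own argument: track what happens to the cycles of $M_0 \cup M$ when the two flipped $M$-edges lie in the same cycle versus in distinct cycles. The paper's proof is merely a terse one-paragraph version of exactly this case analysis, so your more detailed write-up is a faithful expansion of it.
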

\begin{proof}
The multigraph $M_0 \cup M$ is a disjoint union of even cycles. When two edges from $M$ are flipped to form $M'$ then the number of cycles of $M_0 \cup M'$ either stays the same, increases by one (if a cycle breaks into two), or decreases by one (if two cycles are merged). These three possibilities give the three possible types of $M'$ listed above.
\end{proof}

\begin{prop}\label{nSRn1}
The subgraph of $\mathcal{M}(K_{2n})$ induced by the vertices with type $(n)$ is isomorphic to the signed reversal graph $\sr{n-1}$.
\end{prop}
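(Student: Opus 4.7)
The plan is to exhibit an explicit bijection $\phi$ from the set of type-$(n)$ perfect matchings of $K_{2n}$ to $S_{n-1}^{\pm}$ and then verify that $\phi$ sends flip-adjacency in $\mathcal{M}(K_{2n})$ to reversal-adjacency in $\sr{n-1}$.

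First I would set up $\phi$. A type-$(n)$ matching $M$ makes $M_0\cup M$ a single $2n$-cycle, so the alternating walk from $0^+$ visits every vertex and yields a label of length $n-1$. Because the walk starts at $0^+$ and closes via the red edge $\{0^+,0^-\}$, neither $0^+$ nor $0^-$ can appear as the start of a blue edge after the first, so the label has entries in $\{1^{\pm},\ldots,(n-1)^{\pm}\}$; since the $2n$-cycle visits each vertex exactly once, the absolute values are a permutation of $\{1,\ldots,n-1\}$, giving $\phi(M)\in S_{n-1}^{\pm}$. Injectivity is clear because the label reconstructs the walk, and surjectivity follows by reading any $\sigma\in S_{n-1}^{\pm}$ as a walk with $v_0=0^+$, $v_{2i}=\sigma_i$ for $1\leq i\leq n-1$, and $v_{2i+1}$ equal to the $M_0$-partner of $v_{2i+2}$.

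Next I would analyse a flip. If $M'$ differs from $M$ by flipping two blue edges $\{v_{2i},v_{2i+1}\}$ and $\{v_{2j},v_{2j+1}\}$ with $0\leq i<j\leq n-1$, deleting these from the cycle $M_0\cup M$ leaves two paths with endpoint-pairs $\{v_{2i},v_{2j+1}\}$ and $\{v_{2i+1},v_{2j}\}$. Exactly one of the two possible replacements, namely $\{v_{2i},v_{2j}\}$ together with $\{v_{2i+1},v_{2j+1}\}$, reconnects these paths into a single $2n$-cycle; the other pair closes each path into its own cycle, changing the type. So type-preserving flips are parametrised by pairs $(i,j)$ with $0\leq i<j\leq n-1$. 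Writing out the new walk, the block from $v_{2i+1}$ to $v_{2j}$ is traversed in reverse while the complement is unchanged. Translating vertex indices back to label positions via $v_{2i}=\sigma_i$ and $v_{2i+1}=-\sigma_{i+1}$, one obtains
\[
\phi(M')=(\sigma_1,\ldots,\sigma_i,-\sigma_j,-\sigma_{j-1},\ldots,-\sigma_{i+1},\sigma_{j+1},\ldots,\sigma_{n-1}),
\]
which matches \eqref{sradj} with $(i',j')=(i+1,j)$. Since $(i,j)\mapsto(i+1,j)$ is a bijection between flip pairs and reversal pairs $1\leq i'\leq j'\leq n-1$, this proves the isomorphism.

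The main obstacle is purely bookkeeping: one must carefully track the reversed middle segment of the new walk running from $v_{2j}$ backwards to $v_{2i+1}$, and check that the sign convention $v_{2i+1}=-\sigma_{i+1}$ produces exactly the negations called for in \eqref{sradj}. A secondary subtlety is the boundary behaviour when $i=0$ or $j=n-1$, where one endpoint is $0^{\pm}$ rather than a labelled vertex; treating $0^+$ as a phantom $\sigma_0$ and $0^-$ as $-\sigma_n$ in the indexing shows that the same formula persists unchanged.
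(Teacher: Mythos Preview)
Your proposal is correct and follows essentially the same approach as the paper: both define the bijection via the alternating-walk label and then check that a type-preserving flip corresponds exactly to a signed substring reversal of that label. Your treatment is in fact slightly more thorough than the paper's, since you explicitly identify which of the two flip options keeps the cycle connected and you exhibit the bijection $(i,j)\mapsto(i+1,j)$ between flip pairs and reversal intervals, whereas the paper leaves the non-edge direction as ``not hard to see.''
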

\begin{proof}
Let $H$ be the subgraph of $\mathcal{M}(K_{2n})$ induced by the vertices of type $(n)$. Consider a vertex/matching $M$ in $H$. Its union with the identity perfect matching must be a Hamiltonian cycle of $K_{2n}$ of the form:
$$
0^{-}0^{+}\alpha_1^{\epsilon_1}\alpha_1^{\overline{\epsilon_1}}\alpha_2^{\epsilon_2}\alpha_2^{\overline{\epsilon_2}}\cdots \alpha_{n-1}^{\epsilon_{n-1}}\alpha_{n-1}^{\overline{\epsilon_{n-1}}},
$$
where $(\alpha_1,\dots,\alpha_{n-1})$ is a permutation of the set $\{1,\dots,n-1\}$, $\epsilon_1,\dots,\epsilon_{n-1}\in \{+,-\}$, where we use the notations $\overline{+}=-$ and $\overline{-}=+$. The label of $M$ is $\alpha_1^{\overline{\epsilon_1}}\dots\alpha_{n-1}^{\overline{\epsilon_{n-1}}}$. We claim that the correspondence between the vertex set of $H$ and the vertex set of $\sr{n-1}$ given by the label function is a graph isomorphism. To see this, consider a vertex $M'$ of $H$ that is adjacent to $M$. Assume that $M'$ is obtained by flipping two edges of $M$, say $\alpha_i^{\overline{\epsilon_i}}\alpha_{i+1}^{\epsilon_{i+1}}$ and $\alpha_j^{\epsilon_j}\alpha_{j+1}^{\overline{\epsilon_{j+1}}}$ for some $1\leq i<j\leq n$. Then the label of $M'$ is 
$$\alpha_1^{\overline{\epsilon_1}}\cdots \alpha_i^{\overline{\epsilon_i}}\alpha_j^{\epsilon_j}\cdots\alpha_{i+1}^{\epsilon_{i+1}}\alpha_{j+1}^{\epsilon_{j+1}}\cdots \alpha_{n-1}^{\epsilon_{n-1}}.$$
This shows that edges of $H$ are mapped to edges in $\sr{n-1}$ by the label function. It is not hard to see that this correspondence also maps non-edges to non-edges and is actually an isomorphism.
\end{proof}

Let  $G=(V,E)$ and $H=(W,F)$ be two graphs. The \textit{Cartesian or box product} $G\cart H$ has vertex set $V\times W$ where $(g_1,h_1)\sim (g_2,h_2)$ if $g_1\sim g_2$ in $G$ and $h_1=h_2$, or if $g_1=g_2$ and $h_1\sim h_2$ in $H$. This definition can be extended by associativity to the Cartesian product of more than two graphs. The following result is well known (see Sabidussi \cite[Lemma 2.6]{Sabidussi} for $k=2$).
\begin{prop}\label{sabidussichrom}
If $H_1,\dots,H_k$ are graphs, then
\begin{equation}
\chi(H_1\cart \cdots \cart H_k)=\max_{j:1\leq j\leq k}\chi(H_j).
\end{equation}
\end{prop}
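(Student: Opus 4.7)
The plan is to prove the two inequalities separately, which is the standard approach for this result.

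For the lower bound $\chi(H_1 \square \cdots \square H_k) \geq \max_{j} \chi(H_j)$, I would observe that each factor $H_j$ embeds as an induced subgraph of the product: fix any choice of vertices $w_i \in V(H_i)$ for $i \neq j$, and consider the set
\[
\{(w_1,\dots,w_{j-1},v,w_{j+1},\dots,w_k) : v \in V(H_j)\}.
\]
By the definition of the Cartesian product, two such tuples are adjacent in $H_1 \square \cdots \square H_k$ if and only if their $j$th coordinates are adjacent in $H_j$, so the induced subgraph is isomorphic to $H_j$. Hence the product graph contains $H_j$ as an induced subgraph for every $j$, which gives $\chi(H_1 \square \cdots \square H_k) \geq \chi(H_j)$ for every $j$.

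For the upper bound, let $c = \max_j \chi(H_j)$. For each $j$, pick a proper coloring $f_j : V(H_j) \to \mathbb{Z}/c\mathbb{Z}$, obtained by taking a $\chi(H_j)$-coloring and regarding it as one using the abelian group of order $c$ (possibly with some colors unused). Define
\[
f(v_1,\dots,v_k) \;=\; \sum_{j=1}^{k} f_j(v_j) \pmod{c}.
\]
If $(v_1,\dots,v_k)$ and $(v_1',\dots,v_k')$ are adjacent in the product, then by definition they differ in exactly one coordinate, say the $i$th, where $v_i \sim_{H_i} v_i'$ and $v_j = v_j'$ for $j \neq i$. Consequently
\[
f(v_1,\dots,v_k) - f(v_1',\dots,v_k') \;\equiv\; f_i(v_i) - f_i(v_i') \pmod{c},
\]
and this is nonzero modulo $c$ since $f_i$ is a proper coloring of $H_i$ and $v_i \sim_{H_i} v_i'$. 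Thus $f$ is a proper $c$-coloring of the product, giving $\chi(H_1 \square \cdots \square H_k) \leq c$.

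Combining the two bounds yields the proposition. There is no real obstacle here; the only subtle point is that the group $\mathbb{Z}/c\mathbb{Z}$ is needed to add colors coordinatewise, and this works precisely because in a Cartesian product edge exactly one coordinate changes, so the sum changes by a nonzero amount. An alternative, essentially equivalent route would be induction on $k$, using Sabidussi's case $k=2$ and associativity of the Cartesian product, but the direct argument above handles arbitrary $k$ uniformly.
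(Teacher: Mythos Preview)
Your proof is correct and is the standard argument for this classical result. The paper itself does not provide a proof of this proposition; it simply cites it as well known, referring to Sabidussi \cite[Lemma~2.6]{Sabidussi} for the case $k=2$ (and implicitly relying on associativity of the Cartesian product for general $k$). Your direct argument via the $\mathbb{Z}/c\mathbb{Z}$-valued sum coloring handles all $k$ at once and is exactly the proof one would expect; nothing is missing.
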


For the following proposition, we extend the definition of $\sr{n}$ to $n=0$ by defining the signed reversal graph $\sr0$ to be the graph with one vertex and no edges. Note that $\sr1$ is isomorphic to the complete graph $K_2$ and $\sr2$ is isomorphic to the $3$-dimensional cube as shown in Figure \ref{sr2}.

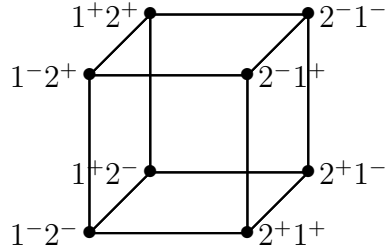
\begin{figure}[h!]
\centering
\begin{tikzpicture}[scale=0.7]
\newcommand{\Depth}{3}
\newcommand{\Height}{3}
\newcommand{\Width}{3}
\coordinate[label=left:$1^{+}2^{-}$] (O) at (0,0,0);
\coordinate[label=left:$1^{+}2^{+}$] (A) at (0,\Width,0);
\coordinate[label=left:$1^{-}2^{+}$] (B) at (0,\Width,\Height);
\coordinate[label=left:$1^{-}2^{-}$] (C) at (0,0,\Height);
\coordinate[label=right:$2^{+}1^{-}$] (D) at (\Depth,0,0);
\coordinate[label=right:$2^{-}1^{-}$] (E) at (\Depth,\Width,0);
\coordinate[label=right:$2^{-}1^{+}$] (F) at (\Depth,\Width,\Height);
\coordinate[label=right:$2^{+}1^{+}$] (G) at (\Depth,0,\Height);

\draw [thick] (O) -- (C) -- (G) -- (D) -- cycle;
\draw [thick](O) -- (A) -- (E) -- (D) -- cycle;
\draw [thick](O) -- (A) -- (B) -- (C) -- cycle;
\draw [thick](D) -- (E) -- (F) -- (G) -- cycle;
\draw [thick](C) -- (B) -- (F) -- (G) -- cycle;
\draw [thick](A) -- (B) -- (F) -- (E) -- cycle;
\draw (O) node{\textbullet};
\draw (A) node{\textbullet};
\draw (B) node{\textbullet};
\draw (C) node{\textbullet};
\draw (D) node{\textbullet};
\draw (E) node{\textbullet};
\draw (F) node{\textbullet};
\draw (G) node{\textbullet};

\end{tikzpicture}
\caption{The signed reversal graph $\sr2$}
\label{sr2}
\end{figure}

Denote by $\mathcal{M}_\lambda$ the subgraph of $\mathcal{M}(K_{2n})$ induced by the vertices of type $\lambda = (\lambda_1,\dots,\lambda_k)$. 
\begin{prop}
Each connected component of the  subgraph of $\mathcal{M}(K_{2n})$ induced by the vertices with type $(\lambda_1,\dots,\lambda_k)$ is isomorphic to the graph $\sr{\lambda_1-1}\cart \cdots \cart \sr{\lambda_{k}-1}$. 
\end{prop}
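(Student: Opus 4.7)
The plan is to generalize Proposition \ref{nSRn1} block by block. Given a matching $M$ of type $\lambda = (\lambda_1, \ldots, \lambda_k)$, the cycles $C_1, \ldots, C_k$ of $M \cup M_0$ partition $V(K_{2n})$ into blocks $B_i = V(C_i)$ of sizes $2\lambda_i$, with each $B_i$ a union of $\lambda_i$ pairs of $M_0$. I call $\{B_1, \ldots, B_k\}$ the \emph{block partition} of $M$. I will show that two matchings of type $\lambda$ lie in the same component of $\mathcal{M}_\lambda$ if and only if they share the same block partition, and that the resulting components carry exactly the stated Cartesian product structure.

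For the ``only if'' direction, Proposition \ref{3types} implies that any flip preserving the type $\lambda$ must flip two $M$-edges lying in a common cycle $C_i$ of $M \cup M_0$, and must keep $C_i$ a single cycle of length $2\lambda_i$ rather than splitting it, since merging or splitting parts of $\lambda$ would change the type. Such a flip only alters $M$ on the vertices of $B_i$, so the block partition is preserved. For the ``if'' direction, suppose $M$ and $M'$ share the block partition $\{B_1, \ldots, B_k\}$. Each restriction $N_i := M|_{B_i}$ (respectively $N_i' := M'|_{B_i}$) is a perfect matching on the $2\lambda_i$ vertices of $B_i$ whose union with $M_0|_{B_i}$ is a single $2\lambda_i$-cycle. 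Applying Proposition \ref{nSRn1} to the complete graph on $B_i$, with $M_0|_{B_i}$ playing the role of the identity matching, the set of such matchings induces a copy of $\sr{\lambda_i - 1}$, which is connected. Hence $N_i$ can be transformed into $N_i'$ by a sequence of type-preserving flips inside $B_i$, and doing this block by block connects $M$ to $M'$ in $\mathcal{M}_\lambda$.

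To finish, the map sending a matching in the component of $M$ to the tuple $(N_1, \ldots, N_k)$ of its block-restrictions is a bijection onto $V(\sr{\lambda_1 - 1}) \times \cdots \times V(\sr{\lambda_k - 1})$. Because every type-preserving flip is confined to a single block $B_i$ and corresponds to an edge of $\sr{\lambda_i - 1}$, adjacency in the component matches exactly the adjacency of the Cartesian product $\sr{\lambda_1 - 1} \cart \cdots \cart \sr{\lambda_k - 1}$. The convention $\sr{0} = K_1$ handles the parts with $\lambda_i = 1$ automatically: in that case $C_i$ has length $2$, so $M$ and $M_0$ coincide on $B_i$ and no flip acts on the $i$-th factor. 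The main subtlety is the ``only if'' direction of the block-partition equivalence, and this rests squarely on the classification in Proposition \ref{3types}; once that is in hand the rest is a clean reduction to Proposition \ref{nSRn1} applied inside each block.
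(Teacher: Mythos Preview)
Your proof is correct and follows essentially the same approach as the paper's own proof: both identify the set partition $\pi_M$ (your ``block partition'') induced by the cycles of $M\cup M_0$, argue via Proposition~\ref{3types} that type-preserving flips fix this partition, and then apply Proposition~\ref{nSRn1} blockwise to obtain the Cartesian product structure. The paper's version is terser, simply invoking ``arguments similar to Proposition~\ref{3types} and Proposition~\ref{nSRn1}'' for the final isomorphism, whereas you spell out the bijection and the adjacency check explicitly.
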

\begin{proof}
Given a perfect matching $M$, let $\pi_M$ denote the set partition of $V(K_{2n})$ induced by the cycles of $M \cup M_0$. 

If $M$ and $M'$ are related by a flip-move and they both have the same type, then $\pi_M = \pi_{M'}$, because the two edges to be flipped must both be chosen from the same cycle of $M \cup M_0$.  In particular, if $\pi_M \not= \pi_{M'}$ then the two matchings are not in the same connected component of $\mathcal{M}_\lambda$.

Now, for any fixed set partition $\pi$ of $V(K_{2n})$ with cells of size $2\lambda_1$, $2\lambda_2$, $\ldots$, $2\lambda_k$ consider all the perfect matchings $M$ in $\mathcal{M}_\lambda$ such that $\pi_M = \pi$. Then arguments similar to Proposition \ref{3types} and Proposition \ref{nSRn1} show that subgraph of $\mathcal{M}_\lambda$ induced by these vertices is isomorphic to $\sr{\lambda_1-1}\cart \cdots \cart \sr{\lambda_{k}-1}$. 
\end{proof}

The next result is an immediate corollary of Proposition~\ref{sabidussichrom}. 
\begin{corollary}
If $\lambda = (\lambda_1, \ldots, \lambda_k)$, then the chromatic number of the subgraph $\mathcal{M}_\lambda$ is $\chi(\sr{\lambda_1-1})$.
\end{corollary}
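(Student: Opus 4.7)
The plan is to chain together the two immediately preceding propositions with one elementary graph-theoretic observation. First I note that the chromatic number of any graph equals the maximum chromatic number of its connected components. By the previous proposition, every connected component of $\mathcal{M}_\lambda$ is isomorphic to the \emph{same} Cartesian product $\sr{\lambda_1-1}\cart\cdots\cart\sr{\lambda_k-1}$, and so
\[
\chi(\mathcal{M}_\lambda)=\chi(\sr{\lambda_1-1}\cart\cdots\cart\sr{\lambda_k-1}).
\]
Proposition~\ref{sabidussichrom} then reduces this to $\max_{1\leq j\leq k}\chi(\sr{\lambda_j-1})$.

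Next I would show that the maximum is attained at $j=1$. Since by convention $\lambda_1\geq\lambda_2\geq\cdots\geq\lambda_k$, it suffices to prove that $\chi(\sr{m})$ is non-decreasing in $m$. The natural way to do this is to exhibit $\sr{m-1}$ as an induced subgraph of $\sr{m}$: consider all signed permutations $(\sigma_1,\ldots,\sigma_m)\in S_m^{\pm}$ with $\sigma_m=m$. An adjacency as in \eqref{sradj} between two such permutations must have $j<m$, because the case $j=m$ would force $\tau_m=-\sigma_i$ for some $i$, contradicting $\tau_m=m$. Hence the subgraph of $\sr{m}$ induced on this set is exactly $\sr{m-1}$ acting on the first $m-1$ coordinates, and consequently $\chi(\sr{m-1})\leq \chi(\sr{m})$.

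Given that most of the content is already packaged in the preceding propositions, the argument is essentially bookkeeping and no step presents a serious obstacle. The only point that requires a (very short) self-contained observation is the monotonicity of $\chi(\sr{m})$ in $m$, which is covered by the induced-subgraph embedding described above.
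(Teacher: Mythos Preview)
Your argument is correct and follows exactly the route the paper intends: combine the structure of the connected components of $\mathcal{M}_\lambda$ with Proposition~\ref{sabidussichrom}. The paper states the corollary as immediate from Proposition~\ref{sabidussichrom} without further comment, so your explicit verification that $\chi(\sr{m})$ is non-decreasing via the induced copy of $\sr{m-1}$ on $\{\sigma\in S_m^{\pm}:\sigma_m=m\}$ simply fills in a detail the paper leaves to the reader.
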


\begin{prop}\label{upperbndsrn}
Let $n\geq 3$ be an integer. Then $\chi(\mathcal{M}(K_{2n}))\leq \chi(\sr{n-1})+\chi(\sr{n-2})$.
\end{prop}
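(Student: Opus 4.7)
The plan is to partition $V(\mathcal{M}(K_{2n}))$ into two classes according to the parity of the number of parts of the type, and color the two classes independently using disjoint color palettes. Concretely, let $A$ be the set of perfect matchings whose type $\lambda=(\lambda_1,\dots,\lambda_k)$ has $k$ odd, and $B$ the set of those with $k$ even. By Proposition~\ref{3types}, if $M$ and $M'$ are adjacent in $\mathcal{M}(K_{2n})$, either they have the same type (so $k$ is preserved) or the types differ by merging two parts into one or splitting one part into two, both of which change $k$ by exactly $\pm 1$. Consequently, every edge internal to $A$ or internal to $B$ must connect two vertices of the \emph{same} type, so the induced subgraph on $A$ is the disjoint union $\bigsqcup_{\lambda\in A}\mathcal{M}_\lambda$, and analogously for $B$.

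Next, I would apply the corollary just proved, namely $\chi(\mathcal{M}_\lambda)=\chi(\sr{\lambda_1-1})$, together with the monotonicity $\chi(\sr{k})\leq\chi(\sr{k+1})$; the latter follows because $\sr{k}$ sits as an induced subgraph of $\sr{k+1}$ (for instance as the vertex subset fixing the last coordinate to $(k+1)^{+}$, since any adjacency in $\sr{k+1}$ preserving that coordinate is a signed reversal acting only on the first $k$ positions). Among partitions of $n$ with an odd number of parts, the unique partition with $\lambda_1=n$ is $(n)$ itself, and every other such partition has $\lambda_1\leq n-2$; hence the chromatic number of the subgraph induced on $A$ equals $\chi(\sr{n-1})$. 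Among partitions of $n$ with an even number of parts, the largest possible value of $\lambda_1$ is $n-1$, attained by $(n-1,1)$; hence the chromatic number of the subgraph induced on $B$ equals $\chi(\sr{n-2})$.

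Finally, coloring $A$ with a palette of $\chi(\sr{n-1})$ colors and $B$ with a disjoint palette of $\chi(\sr{n-2})$ colors produces a proper coloring of all of $\mathcal{M}(K_{2n})$, because every edge joining $A$ to $B$ is properly colored automatically. This gives the bound $\chi(\mathcal{M}(K_{2n}))\leq\chi(\sr{n-1})+\chi(\sr{n-2})$. I do not foresee a substantive obstacle in this argument: the one insight that has to be identified is that the parity of the number of parts in the type bipartitions the ``type-adjacency graph'' (where types are joined if a flip can transform one to the other), which reduces the problem to two pieces whose chromatic numbers are controlled by the preceding corollary and a routine induced-subgraph embedding.
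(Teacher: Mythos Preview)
Your argument is correct and is essentially the paper's own proof: the paper partitions the vertices into layers $V_\ell$ according to the number $\ell$ of parts, colors $V_1$ and $V_2$ with disjoint palettes of sizes $\chi(\sr{n-1})$ and $\chi(\sr{n-2})$, and then reuses these palettes on the remaining odd and even layers respectively---which is exactly your bipartition into $A$ and $B$ by the parity of $k$. Your version is in fact slightly cleaner, since you make explicit the monotonicity $\chi(\sr{k})\leq\chi(\sr{k+1})$ (via the induced embedding fixing the last coordinate to $(k{+}1)^{+}$), which the paper uses implicitly when asserting that the higher layers can be colored from the same palettes.
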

\begin{proof}
Consider the partition of the vertices of $\mathcal{M}(K_{2n})$ according to their type. Now form a coarser partition $V_1$, $V_2$, $\ldots$, $V_n$ where 
\[
V_\ell = \bigcup_{|\lambda|=\ell} \mathcal{M}_\lambda.
\]
In other words, $V_\ell$ contains all the perfect matchings $M$ such that $M \cup M_0$ has $\ell$ cycles. A perfect matching in $V_\ell$ has neighbours only in $V_{\ell-1}$, $V_\ell$ and $V_{\ell+1}$.
Now the subgraph induced by $V_1$ is isomorphic to $\sr{n-1}$ and so (obviously) can be colored in $\chi(\sr{n-1})$ colors. Every connected component of the subgraph induced by $V_2$ has the form $\sr{t-1} \cart \sr{n-t-1}$ for some $1\leq t\leq n-1$ and so can be colored with $\chi(\sr{n-2})$ colors, with these colors chosen to be distinct from the colors used on $V_1$.
The ``odd layers'' $V_3$, $V_5$, $V_7$, $\ldots$ can be colored using colors from those used to color $V_1$ and the ``even layers'' $V_4$, $V_6$, $V_8$, $\ldots$ can be colored using colors from those used to color $V_2$. In total, at most $\chi(\sr{n-1})+\chi(\sr{n-2})$ colors are required.\end{proof}


We conclude this section with an observation regarding the structure of the signed reversal graph $\sr{k}$ for $k\geq 2$. Let $\pi =(\pi_1,\dots, \pi_k)$ be a permutation in $S_k$, and denote by $V_{\pi}$ the set of $2^k$ signed permutations of the form $(\pi_1^{\epsilon_1}, \dots, \pi_k^{\epsilon_k})$, where $\epsilon_1,\dots,\epsilon_k\in \{+,-\}$. A vertex of $\sr{k}$ lying in $V_\pi$ is adjacent to exactly $k$ other vertices in $V_\pi$, namely the $k$ signed permutations obtained by changing the sign in exactly one coordinate. If $\pi'$ is a permutation obtained from $\pi$ by substring reversal, then each vertex of $V_\pi$ is adjacent to a unique vertex in $V_{\pi'}$. In this case, there is a matching between $V_\pi$ and $V_{\pi'}$. The \emph{reversal graph} $R_k$ is the graph with vertex set $S_k$, where two permutations are adjacent if they are substring reversals of each other. See Section \ref{sec:signreversal} and Chung and Tobin \cite{CT} for more details on these graphs.
\begin{prop}\label{equitablesrn1}
Let $k\geq 2$ be an integer. The partition of the signed reversal graph $\sr{k}$ into cells $\{V_{\pi} : \pi\in S_k\}$ is an equitable partition with quotient matrix  $kI_{k!}+A(R_k)$.
\end{prop}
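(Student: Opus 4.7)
The plan is to verify the proposition by directly computing, for any $\pi,\pi'\in S_k$ and any $v\in V_\pi$, the number of neighbors of $v$ that lie in $V_{\pi'}$, and then checking that this count depends only on the pair $(\pi,\pi')$ and matches the $(\pi,\pi')$-entry of $kI_{k!}+A(R_k)$.

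First I would decompose the adjacency relation \eqref{sradj} according to whether $i=j$ or $i<j$. When $i=j$, the move simply negates the entry in position $i$ while leaving all other entries (and their signs) unchanged, so the resulting neighbor still lies in the same cell $V_\pi$. This yields exactly $k$ neighbors of $v$ inside $V_\pi$, and this count is independent of the choice of $v\in V_\pi$, accounting for the diagonal contribution $kI_{k!}$ to the quotient matrix.

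When $i<j$, applying \eqref{sradj} to $v=(\pi_1^{\epsilon_1},\dots,\pi_k^{\epsilon_k})$ produces a signed permutation whose underlying unsigned permutation, call it $\pi^{(i,j)}$, is the substring reversal of $\pi$ on positions $i$ through $j$; in particular $\pi$ and $\pi^{(i,j)}$ are adjacent in $R_k$. The sign pattern of the neighbor is fully determined by $(\epsilon_1,\dots,\epsilon_k)$ and the pair $(i,j)$, so for each such pair $(i,j)$ the vertex $v$ has exactly one neighbor lying in $V_{\pi^{(i,j)}}$. The main step that requires care is verifying that distinct pairs $(i,j)$ with $i<j$ produce distinct permutations $\pi^{(i,j)}$: since $\pi$ is a permutation we have $\pi_i\neq\pi_j$, so a substring reversal always alters both the entry in position $i$ and the entry in position $j$. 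Hence from $\pi^{(i,j)}$ one recovers $(i,j)$ as the leftmost and rightmost coordinates on which $\pi$ and $\pi^{(i,j)}$ disagree, establishing injectivity.

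Combining the two cases: for any $v\in V_\pi$ the number of neighbors in $V_{\pi'}$ equals $k$ when $\pi'=\pi$, equals $1$ whenever $\pi'$ arises as one of the $\binom{k}{2}$ substring reversals of $\pi$ (equivalently, whenever $\pi$ and $\pi'$ are adjacent in $R_k$), and equals $0$ otherwise. Since these counts depend only on $(\pi,\pi')$, the partition $\{V_\pi : \pi\in S_k\}$ is equitable, and its quotient matrix is precisely $kI_{k!}+A(R_k)$. As a sanity check, the row sums give $k+\binom{k}{2}=\binom{k+1}{2}$, matching the valency of $\sr{k}$. The only non-obvious ingredient in the argument is the injectivity step above; everything else amounts to a direct unpacking of the definitions.
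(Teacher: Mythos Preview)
Your proof is correct and follows essentially the same approach as the paper's: identify the $i=j$ moves as giving the $k$ neighbors inside $V_\pi$ (the paper phrases this as ``the induced subgraph is the $k$-cube''), and the $i<j$ moves as giving one neighbor in each $V_{\pi'}$ with $\pi'$ adjacent to $\pi$ in $R_k$. Your version is in fact more careful, since you explicitly verify the injectivity of $(i,j)\mapsto \pi^{(i,j)}$ to ensure the off-diagonal count is exactly $1$; the paper takes this for granted.
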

\begin{proof}
Note that the subgraph of $\sr{k}$ induced by $V_{\pi}$ is isomorphic to the $k$-dimensional cube for any $\pi \in S_k$. For any permutations $\pi\neq \pi'\in S_k$, each vertex in $V_{\pi}$ has exactly one neighbor in $V_{\pi'}$ if $\pi$ and $\pi'$ are adjacent in the reversal graph $R_k$ and no neighbors, otherwise. 
\end{proof}

\section{Chromatic numbers}\label{sec:chi}

In this section we consider bounds on the chromatic number of $\mathcal{M}(K_{2n})$. In order to apply  Proposition~\ref{upperbndsrn} we need to know the chromatic number of the signed reversal graph $\sr{n}$, and so we start there.

\begin{figure}[!h]
\begin{center}
\begin{tikzpicture}[scale=0.9,node_style/.style={circle,draw=black}, edge_style/.style= {color=blue, line width=1.3pt},  identity_style/.style= {color=red, line width=1.3pt}] 

     \node[node_style] (v1) at (0,2) {123};
     \node[node_style] (v2) at (1,1) {213};
     \node[node_style] (v3) at (1,-1) {231};
     \node[node_style] (v4) at (0,-2) {321};
     \node[node_style] (v5) at (-1,-1) {312};
     \node[node_style] (v6) at (-1,1) {132};
     \draw[edge_style]  (v1) edge (v2);
     \draw[edge_style]  (v2) edge (v3);
     \draw[edge_style]  (v3) edge (v4);
     \draw[edge_style]  (v4) edge (v5);
     \draw[edge_style]  (v5) edge (v6);
     \draw[edge_style]  (v6) edge (v1);
     \draw[identity_style]  (v4) edge (v1);
     \draw[identity_style]  (v5) edge (v2);
     \draw[identity_style]  (v3) edge (v6);
\end{tikzpicture}    
\hspace{1cm}
\begin{tikzpicture}[scale=0.9,node_style/.style={circle,draw=black}, edge_style/.style= {color=blue, line width=1.3pt},  identity_style/.style= {color=red, line width=1.3pt}] 
     \node[node_style] (v1) at (0,2) {12};
     \node[node_style] (v2) at (1,1) {23};
     \node[node_style] (v3) at (1,-1) {31};
     \node[node_style] (v4) at (0,-2) {12};
     \node[node_style] (v5) at (-1,-1) {23};
     \node[node_style] (v6) at (-1,1) {31};
     \draw[edge_style]  (v1) edge (v2);
     \draw[edge_style]  (v2) edge (v3);
     \draw[edge_style]  (v3) edge (v4);
     \draw[edge_style]  (v4) edge (v5);
     \draw[edge_style]  (v5) edge (v6);
     \draw[edge_style]  (v6) edge (v1);
     \draw[identity_style]  (v4) edge (v1);
     \draw[identity_style]  (v5) edge (v2);
     \draw[identity_style]  (v3) edge (v6);
     \end{tikzpicture}    
\end{center}
\caption{Equitable partition (left) and a proper $3$-coloring (right) of $\sr3$}
\label{fig:sr3}
\end{figure}
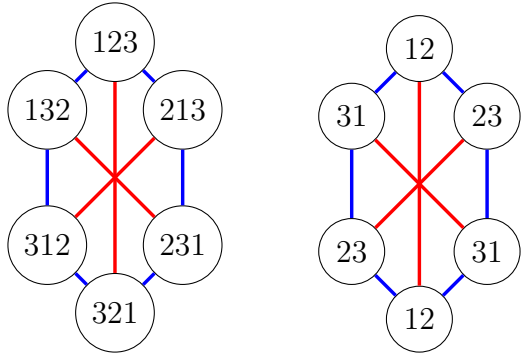

\subsection{The chromatic number of $\sr{n}$}

As $\sr{1}$ is a single edge, and $\sr2$ is the cube, these graphs both have chromatic number $2$. The graph $\sr3$ has an equitable partition into $3!=6$ cells of the form $V_{\pi}$ for $\pi \in S_3$,  where the graph induced on each cell is the $3$-cube. The first diagram of Figure \ref{fig:sr3} shows how the cells are connected, with blue edges indicating two cells connected by a matching induced by a substring reversal of length $2$ and red edges indicating cells connected by a substring reversal of length $3$ (i.e., just reversing the entire permutation).  As each cell induces a cube, which is bipartite, we intend to find a coloring that uses just two colors on each cell. Between cells $V_\pi$ and $V_{\pi'}$ there are either no edges, or a matching induced by the same signed substring reversal on every element of $V_\pi$. Each cell contains a unique (unsigned) permutation and so we can specify the coloring of a cell by the two colors used on the subgraph induced by $V_\pi$ with the convention that we use the first color on the vertex of $V_{\pi}$ that has $+$ on all its entries. The right-hand diagram of Figure \ref{fig:sr3} describes a coloring of the cells $V_{\pi}$. The label $12$ on $V_{123}$ indicates that the $3$-dimensional cube induced by $V_{123}$ is colored with colors $1$ and $2$, with vertex $1^+2^+3^+$ colored $1$. The neighborhood of $1^+2^+3^+$ consists of three vertices in $V_{123}$ (each of them colored $2$), the vertex $1^+3^-2^-$ in $V_{132}$ (which has color $3$), the vertex $2^-1^-3^+$ in $V_{213}$ (which is colored 2) and $3^-2^-1^-$ in $V_{321}$ (which has color $2$). One can verify that this is a proper $3$-coloring, and hence $\chi(\sr{3}) \leq 3$. The graph $\sr{3}$ contains a cycle on $7$ vertices: 
$$
1^+2^+3^+\sim 1^+2^+3^-\sim 1^+2^-3^- \sim 1^-2^-3^-\sim 2^+1^+3^- \sim 2^+3^+1^-\sim 3^-2^-1^- \sim 1^+2^+3^+,
$$
and therefore $\chi(\sr{3})=3$.



A similar approach can be used to exhibit a $4$-colouring of $\sr4$. Figure \ref{fig:sr4} shows the equitable partition of $\sr4$ into $24$ cells, with each cell containing $16$ vertices inducing a $4$-cube. In this diagram, blue edges indicate that two cells are joined by a matching induced by a substring reversal of length $2$,  and the red edges indicate cells connected by a matching induced by a substring reversal of length $3$. The diagram shows the cells in ``layers'' so that blue edges connect cells in adjacent layers, while the red edges connect cells that are either one or three layers apart. Hence by using colors $\{1,2\}$ for each cell in the odd layers and $\{3,4\}$ for each cell in even layers, all of the edges within the cells are properly colored and all the blue and red matchings are properly colored. This only leaves the matchings between cells related by a substring reversal of length $4$ (i.e., the entire permutation is reversed). The colors chosen for each cell shown in Figure~\ref{fig:sr4} ensure that the matching between, say, $V_{2341}$ and $V_{1432}$ is properly colored, and similarly for all the other pairs of cells related by full reversals.
Unfortunately, we are not aware of a theoretical way to prove that $\chi(\sr{4})>3$ and we rely on the computer for this part.

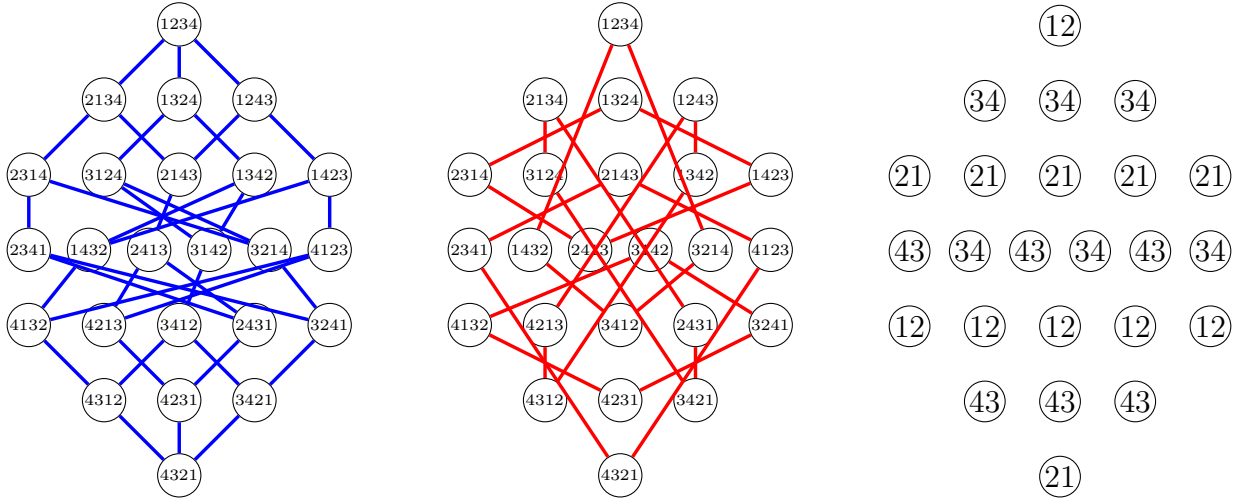
\begin{figure}[!ht]
\begin{center}
\begin{tikzpicture}[node_style/.style={circle,draw=black,inner sep=0.5pt,minimum size=0pt}, edge_style/.style= {color=blue, line width=1.3pt}] 

     \node[node_style] (1234) at (0,3)   {\tiny 1234};
     \node[node_style] (2134) at (-1,2)   {\tiny 2134};
     \node[node_style] (1243) at (1,2)  {\tiny 1243};
     \node[node_style] (1324) at (0,2)   {\tiny 1324};

     \node[node_style] (1342) at (1,1)   {\tiny 1342};
     \node[node_style] (2143) at (0,1)  {\tiny 2143};
     \node[node_style] (3124) at (-1,1)   {\tiny 3124};
     \node[node_style] (1423) at (2,1)  {\tiny 1423};
     \node[node_style] (2314) at (-2,1)  {\tiny 2314};

     \node[node_style] (2341) at (-2,0)  {\tiny 2341};
     \node[node_style] (4123) at (2,0)  {\tiny 4123};
     \node[node_style] (1432) at (-1.2,0)  {\tiny 1432};
     \node[node_style] (3214) at (1.2,0)   {\tiny 3214};
     \node[node_style] (3142) at (0.4,0)  {\tiny 3142};
     \node[node_style] (2413) at (-0.4,0) {\tiny 2413};

     \node[node_style] (4132) at (-2,-1) {\tiny 4132};
     \node[node_style] (3241) at (2,-1)  {\tiny 3241};
     \node[node_style] (4213) at (-1,-1)  {\tiny 4213};
     \node[node_style] (3412) at (0,-1)  {\tiny 3412};
     \node[node_style] (2431) at (1,-1)  {\tiny 2431};

     \node[node_style] (3421) at (1,-2)   {\tiny 3421};
     \node[node_style] (4312) at (-1,-2)  {\tiny 4312};
     \node[node_style] (4231) at (0,-2)  {\tiny 4231};
     \node[node_style] (4321) at (0,-3)   {\tiny 4321};

     \draw[edge_style]  (1234) edge (2134);
     \draw[edge_style]  (1234) edge (1324);
     \draw[edge_style]  (1234) edge (1243);
     \draw[edge_style]  (2314) edge (2134);
     \draw[edge_style]  (2143) edge (2134);     
     \draw[edge_style]  (3124) edge (1324);
     \draw[edge_style]  (1342) edge (1324);
     \draw[edge_style]  (2143) edge (1243);
     \draw[edge_style]  (1423) edge (1243);
     \draw[edge_style]  (2314) edge (2341);
     \draw[edge_style]  (2314) edge (3214);
     \draw[edge_style]  (2143) edge (2413);     
     \draw[edge_style]  (3124) edge (3214);
     \draw[edge_style]  (3124) edge (3142);
     \draw[edge_style]  (1342) edge (3142);
     \draw[edge_style]  (1342) edge (1432);
     \draw[edge_style]  (1423) edge (1432);
     \draw[edge_style]  (1423) edge (4123);
     \draw[edge_style]  (4132) edge (4312);
     \draw[edge_style]  (3412) edge (4312);     
     \draw[edge_style]  (4213) edge (4231);
     \draw[edge_style]  (2431) edge (4231);
     \draw[edge_style]  (3412) edge (3421);
     \draw[edge_style]  (3241) edge (3421);
     \draw[edge_style]  (3241) edge (2341);
     \draw[edge_style]  (3241) edge (3214);     
     \draw[edge_style]  (2431) edge (2341);
     \draw[edge_style]  (2431) edge (2413);
     \draw[edge_style]  (3412) edge (3142);
     \draw[edge_style]  (2413) edge (4213);
     \draw[edge_style]  (4123) edge (4213);
     \draw[edge_style]  (4132) edge (1432);
     \draw[edge_style]  (4132) edge (4123);
     \draw[edge_style]  (4321) edge (4312);
     \draw[edge_style]  (4321) edge (4231);
     \draw[edge_style]  (4321) edge (3421);
\end{tikzpicture}
\hspace{1cm}
\begin{tikzpicture}[node_style/.style={circle,draw=black,inner sep=0.5pt,minimum size=0pt}, edge_style/.style= {color=red, line width=1.3pt}] 
     \node[node_style] (1234) at (0,3)   {\tiny 1234};
     \node[node_style] (2134) at (-1,2)   {\tiny 2134};
     \node[node_style] (1243) at (1,2)  {\tiny 1243};
     \node[node_style] (1324) at (0,2)   {\tiny 1324};

     \node[node_style] (1342) at (1,1)   {\tiny 1342};
     \node[node_style] (2143) at (0,1)  {\tiny 2143};
     \node[node_style] (3124) at (-1,1)   {\tiny 3124};
     \node[node_style] (1423) at (2,1)  {\tiny 1423};
     \node[node_style] (2314) at (-2,1)  {\tiny 2314};

     \node[node_style] (2341) at (-2,0)  {\tiny 2341};
     \node[node_style] (4123) at (2,0)  {\tiny 4123};
     \node[node_style] (1432) at (-1.2,0)  {\tiny 1432};
     \node[node_style] (3214) at (1.2,0)   {\tiny 3214};
     \node[node_style] (3142) at (0.4,0)  {\tiny 3142};
     \node[node_style] (2413) at (-0.4,0) {\tiny 2413};

     \node[node_style] (4132) at (-2,-1) {\tiny 4132};
     \node[node_style] (3241) at (2,-1)  {\tiny 3241};
     \node[node_style] (4213) at (-1,-1)  {\tiny 4213};
     \node[node_style] (3412) at (0,-1)  {\tiny 3412};
     \node[node_style] (2431) at (1,-1)  {\tiny 2431};

     \node[node_style] (3421) at (1,-2)   {\tiny 3421};
     \node[node_style] (4312) at (-1,-2)  {\tiny 4312};
     \node[node_style] (4231) at (0,-2)  {\tiny 4231};
     \node[node_style] (4321) at (0,-3)   {\tiny 4321};

     \draw[edge_style]  (1234) edge (3214);
     \draw[edge_style]  (1234) edge (1432);
     \draw[edge_style]  (1432) edge (3412);
     \draw[edge_style]  (3214) edge (3412);
     
     \draw[edge_style]  (4321) edge (2341);
     \draw[edge_style]  (4321) edge (4123);
     \draw[edge_style]  (2341) edge (2143);
     \draw[edge_style]  (4123) edge (2143);
     
     \draw[edge_style]  (2314) edge (1324);
     \draw[edge_style]  (2314) edge (2413);
     \draw[edge_style]  (1324) edge (1423);
     \draw[edge_style]  (2413) edge (1423);

     \draw[edge_style]  (2134) edge (3124);
     \draw[edge_style]  (2134) edge (2431);
     \draw[edge_style]  (3124) edge (3421);
     \draw[edge_style]  (2431) edge (3421);

     \draw[edge_style]  (1243) edge (4213);
     \draw[edge_style]  (1243) edge (1342);
     \draw[edge_style]  (1342) edge (4312);
     \draw[edge_style]  (4213) edge (4312);
    
     \draw[edge_style]  (4231) edge (3241);
     \draw[edge_style]  (4231) edge (4132);
     \draw[edge_style]  (3241) edge (3142);
     \draw[edge_style]  (4132) edge (3142);    

\end{tikzpicture}
\hspace{1cm}
\begin{tikzpicture}[node_style/.style={circle,draw=black,inner sep=0.5pt,minimum size=0pt}, edge_style/.style= {color=blue, line width=1.3pt}] 

     \node[node_style] (1234) at (0,3)   { 12};
    
     \node[node_style] (2134) at (-1,2)   { 34};
     \node[node_style] (1243) at (1,2)  {34};
     \node[node_style] (1324) at (0,2)   { 34};

     \node[node_style] (1342) at (1,1)   {21};
     \node[node_style] (2143) at (0,1)  { 21};
     \node[node_style] (3124) at (-1,1)   {21};
     \node[node_style] (1423) at (2,1)  {21};
     \node[node_style] (2314) at (-2,1)  {21};

     \node[node_style] (2341) at (-2,0)  {43};
     \node[node_style] (4123) at (2,0)  {34};
     \node[node_style] (3214) at (1.2,0)  {43};
     \node[node_style] (1432) at (-1.2,0)   {34};
     \node[node_style] (3142) at (0.4,0)  {34};
     \node[node_style] (2413) at (-0.4,0) {43};

     \node[node_style] (3241) at (2,-1) {12};
     \node[node_style] (4132) at (-2,-1)  {12};
     \node[node_style] (2431) at (1,-1)  {12};
     \node[node_style] (3412) at (0,-1)  {12};
     \node[node_style] (4213) at (-1,-1)  {12};

     \node[node_style] (4312) at (1,-2)   {43};
     \node[node_style] (3421) at (-1,-2)  {43};
     \node[node_style] (4231) at (0,-2)  {43};

     \node[node_style] (4321) at (0,-3)   {21};
\end{tikzpicture}  

\end{center}

\caption{The equitable partition and a $4$-coloring for $\sr4$}
\label{fig:sr4}
\end{figure}

For $\sr5$, we can no longer describe the coloring ``by hand'', but by assuming that the cube induced by each cell $V_{\pi}$ is colored using only two colors, it is easily within computer range to verify that $\sr5$ has a $4$-coloring. For completeness, we present such a coloring in the Appendix. For $\sr{6}$ we can find a $5$-coloring by computer, but cannot even rule out the existence of a $4$-coloring.

\begin{table}
\begin{center}
\begin{tabular}{ccccccc}
\toprule
$n$ & 1 & 2 & 3 & 4 & 5 & 6\\
\midrule
$\chi(\sr{n})$ & 2& 2 & 3 & 4 & 4 & $\leq 5$\\
\bottomrule
\end{tabular}
\end{center}
\caption{Chromatic number of $\sr{n}$ for $n \leq 6$}
\label{chromaticsrn}
\end{table}

\subsection{The chromatic number of $\mathcal{M}(K_{2n})$}

For $n=2$, the graph $\mathcal{M}(K_4)$ is isomorphic to the complete graph on $3$ vertices which has chromatic number $3$. For $n=3$, the graph $\mathcal{M}(K_6)$ is the unique strongly regular graph with parameters $(15,6,1,3)$. This is the Kneser graph $K(6,2)$ which is well-known to have chromatic number $4$. (This is also a consequence of Proposition~\ref{upperbndsrn}.) For $n=4$, the graph $\mathcal{M}(K_8)$ has $105$ vertices, is $12$-regular. By Proposition~\ref{upperbndsrn} it has chromatic number number at most $\chi(\sr{2}) + \chi(\sr{3}) = 5$ and in fact its chromatic number is equal to $5$. For $n=5$, the graph $\mathcal{M}(K_{10})$ is a $20$-regular graph on $945$ vertices. Figure~\ref{flip5} shows its equitable partition into types and gives the number of neighbors of a vertex of one cell in each adjacent cell. 

By computation we have found both a proper $6$-coloring of $\mathcal{M}(K_{10})$ and a proper $7$-coloring of $\mathcal{M}(K_{12})$, thus providing supporting evidence for the conjecture of Fabila-Monroy et al. \cite{FFHHUW} that $\chi(\mathcal{M}(K_{2n})) = n+1$. In Table \ref{tab:chibounds}, we list the best lower and upper bounds for $\mathcal{M}(K_{2n})$ for $2\leq n\leq 11$. Note that the upper bounds $\mathcal{M}(K_{10})\leq 6$ and $\mathcal{M}(K_{12})\leq 7$ come from our computations and the upper bound $\mathcal{M}(K_{14})\leq 9$ comes from using Proposition \ref{upperbndsrn} and Table \ref{chromaticsrn}.

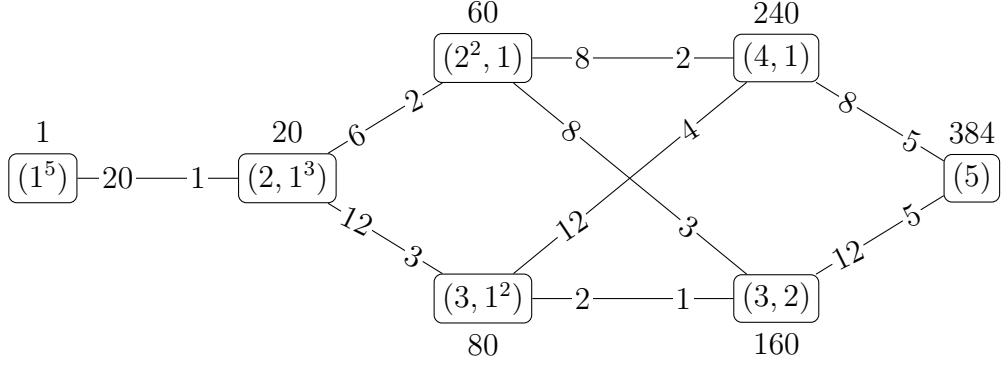
\begin{figure}[ht]
\begin{center}
\begin{tikzpicture}[xscale=1.3,yscale=1.6, node_style/.style={rectangle,rounded corners=3pt, draw=black,inner sep=3pt}, edge_style/.style= {color=black}] 
     \node[node_style] (v1) at (-0.5,0) { $\left(1^5\right)$};
     \node[node_style] (v2) at (2,0) { $\left(2,1^3\right)$};
     \node[node_style] (v3) at (4,1) { $\left(2^2,1\right)$};
     \node[node_style] (v4) at (4,-1){  $\left(3,1^2\right)$};
     \node[node_style] (v5) at (7,1) { $\left(4,1\right)$};
     \node[node_style] (v6) at (7,-1) { $\left(3,2\right)$};
     \node[node_style] (v7) at (9,0) { $\left(5\right)$};

     \draw[edge_style]  (v1) edge 
       node [inner sep=1pt, sloped, near start, fill=white] {20} 
       node [inner sep=1pt, sloped, near end, fill=white] {1} (v2);
     \draw[edge_style]  (v2) edge 
       node [inner sep=1pt, sloped, near start, fill=white] {6} 
       node [inner sep=1pt, sloped, near end, fill=white] {2} (v3);
     \draw[edge_style]  (v2) edge 
       node [inner sep=1pt, sloped, near start, fill=white] {12} 
       node [inner sep=1pt, sloped, near end, fill=white] {3}  (v4);
     \draw[edge_style]  (v4) edge 
       node [inner sep=1pt, sloped, near start, fill=white] {12} 
       node [inner sep=1pt, sloped, near end, fill=white] {4} (v5);
     \draw[edge_style]  (v3) edge 
       node [inner sep=1pt, sloped, near start, fill=white] {8} 
       node [inner sep=1pt, sloped, near end, fill=white] {2} (v5);
     \draw[edge_style]  (v4) edge 
            node [inner sep=1pt, sloped, near start, fill=white] {2} 
       node [inner sep=1pt, sloped, near end, fill=white] {1}  (v6);
     \draw[edge_style]  (v3) edge 
            node [inner sep=1pt, sloped, near start, fill=white] {8} 
       node [inner sep=1pt, sloped, near end, fill=white] {3} (v6);
     \draw[edge_style]  (v6) edge 
            node [inner sep=1pt, sloped, near start, fill=white] {12} 
       node [inner sep=1pt, sloped, near end, fill=white] {5} (v7);
     \draw[edge_style]  (v5) edge 
            node [inner sep=1pt, sloped, near start, fill=white] {8} 
       node [inner sep=1pt, sloped, near end, fill=white] {5} (v7);
     \node [above] at (v1.north) {$1$};
    \node [above] at (v2.north) {$20$};
    \node [above] at (v3.north) {$60$};
    \node [below] at (v4.south) {$80$};
    \node [above] at (v5.north) {$240$};
    \node [below] at (v6.south) {$160$};
    \node [above] at (v7.north) {$384$};
\end{tikzpicture}    
\end{center}
\caption{Equitable partition of $\mathcal{M}(K_{10})$ into types}
\label{flip5}
\end{figure}

\bigskip

For larger values of $n$, the best we can do is Theorem \ref{ourupperbnd}, which we now prove:
\begin{proof}[Proof Of Theorem~\ref{ourupperbnd}]
Let $q$ be the smallest prime power such that $q\geq 2n+1$, and let $\sigma: V(K_{2n}) \rightarrow \text{GF}(q)$ be an arbitrary injective function.  Then define a coloring of the flip graph as follows; if $X$ is a perfect matching with edges $\{x_1,x_2\},\dots,\{x_{2n-1},x_{2n}\}$, then color it with the color
\[
f(X):=\sigma(x_1)\sigma(x_2)+\sigma(x_3)\sigma(x_4)+\dots +\sigma(x_{2n-1})\sigma(x_{2n}).\
\]

Now we show that this coloring is a proper coloring. So let $Y$ be a matching with edges $\{y_1,y_2\},\dots,
\{y_{2n-1},y_{2n}\}$ that is adjacent to $X$. Without loss of generality we may assume that $\{x_{2i-1},x_{2i}\}=\{y_{2i-1},y_{2i}\}$ for any $i\geq 3$. If $\{y_1,y_2\}=\{x_1,x_3\}$ and $\{y_3,y_4\}=\{x_2,x_4\}$, then
\begin{align*}
f(Y)-f(X)&=\sigma(x_1)\sigma(x_3)+\sigma(x_2)\sigma(x_4)-\sigma(x_1)\sigma(x_2)-\sigma(x_3)\sigma(x_4)\\
&=\left(\sigma(x_1)-\sigma(x_4)\right)(\sigma(x_3)-\sigma(x_2)).
\end{align*}
If $\{y_1,y_2\}=\{x_1,x_4\}$ and $\{y_3,y_4\}=\{x_2,x_3\}$, then
\begin{align*}
f(Y)-f(X)&=\sigma(x_1)\sigma(x_4)+\sigma(x_2)\sigma(x_3)-\sigma(x_1)\sigma(x_2)-\sigma(x_3)\sigma(x_4)\\
&=\left(\sigma(x_1)-\sigma(x_3)\right) \left( \sigma(x_4)-\sigma(x_2)\right).
\end{align*}
As $\sigma$ is injective, the final value in each case is the product of non-zero values and so $f(Y)\neq f(X)$. Thus, we have a proper coloring of $\mathcal{M}(K_{2n})$ with $q$ colors and therefore, $\chi(\mathcal{M}(K_{2n}))\leq q$.
\end{proof}
Nagura \cite{Nagura} proved that for any $m\geq 25$, there is at least one prime between $m$ and $1.2m$. This means that for $n\geq 12$, there is a prime between $2n+1$ and $1.2(2n+1)=2.4n+1.2<4n-4$ and hence, our previous result improves the upper bound of $4n-4$ from \cite{FFHHUW}. It is straightforward to check it is also an improvement for $3\leq n\leq 11$ as seen in Table~\ref{tab:chibounds}. 

\begin{table}
\begin{center}
\begin{tabular}{ccccccccccc}
\toprule
$n$ & 2 & 3 & 4 & 5 & 6 & 7 & 8 & 9 & 10 & 11\\
\midrule
$4n-4$ & 4 & 8 & 12 & 16 & 20 & 24 & 28 & 32 & 36 & 40 \\
Th \ref{ourupperbnd} & 5 & 7 & 9 & 11 & 13 & 16 & 17 & 19 & 23 & 23\\
Upper & 3 & 4 & 5 & 6 & 7 & 9 & ? & ? & ? & ? \\
Lower & 3 & 4 & 5 & 5 & 5 & 5 & 5 & 5 & 5 & 5 \\
\bottomrule
\end{tabular}
\end{center}
\caption{Bounds on chromatic number of $\mathcal{M}(K_{2n})$}
\label{tab:chibounds}
\end{table}

Dusart \cite{Dusart} proved that for $m$ sufficiently large, there is a prime between $m$ and $m\left(1+\frac{1}{\ln m}\right)$ and therefore 
$$\chi(\mathcal{M}(K_{2n}))\leq (2n+1)\left(1+\frac{1}{\ln^3(2n+1)}\right)=(2n+1)(1+o(1)),$$ 
for $n$ sufficiently large.

\section{Spectral properties of the signed reversal graph}
\label{sec:signreversal}

For $n\geq 1$, let $X=X_n$ denote the $n\times n$ matrix whose $(i,j)$-th entry equals $\min(i,j,n-i+1,n-j+1)$. For a real number $x$, let $D$ be the unique diagonal matrix such that each row of $D+X$ sums to $x$. Chung and Tobin \cite[Lemma 9]{CT} proved that the eigenvalues of $D+X$ are $\mu_k=x-\lfloor \frac{k}{2}\rfloor n +2{\lfloor \frac{k}{2}\rfloor \choose 2}$ for $1\leq k\leq n$. Using this result, these authors showed that the second largest eigenvalue of the reversal graph $R_n$ is ${n\choose 2}-n$ (with the largest eigenvalue being ${n\choose 2}$). In this section, we use two equitable partitions of the signed reversal graph $\sr{n}$ to determine part of its spectrum.
\begin{prop}\label{eigsr}
The spectrum of $\sr{n}$ contains the following eigenvalues:
\begin{enumerate}
\item the eigenvalues of $D'+X$ and $D'-X$, where $X$ is the Chung-Tobin matrix from above and $D'$ is the diagonal degree matrix that makes each row sum of $D'+X$ equal to ${n+1\choose 2}$.
\item $\mu+n$, where $\mu$ is an eigenvalue of the reversal graph $R_n$.
\end{enumerate}
\end{prop}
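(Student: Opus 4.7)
The proof will exhibit two equitable partitions of $\sr{n}$ and read off eigenvalues from their quotient matrices, exploiting the standard fact that every eigenvalue of the quotient matrix of an equitable partition is an eigenvalue of the adjacency matrix.

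Part (2) is essentially already available: Proposition~\ref{equitablesrn1} states that the partition $\{V_{\pi} : \pi \in S_n\}$ of $\sr{n}$ is equitable with quotient matrix $nI_{n!}+A(R_n)$. The eigenvalues of this quotient matrix are precisely $n+\mu$ as $\mu$ runs over the spectrum of $R_n$, so each such value lies in the spectrum of $\sr{n}$.

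For Part (1), the plan is to build a finer refinement of the ``natural'' partition by position. For $m\in\{1,\dots,n\}$ and $\epsilon\in\{+,-\}$, let $C_m^{\epsilon}$ be the set of signed permutations $\sigma=(\sigma_1,\dots,\sigma_n)\in S_n^{\pm}$ with $\sigma_m=\epsilon 1$. A generator of $\sr{n}$ is a signed substring reversal on positions $[i,j]$; such a reversal fixes position $m$ if $m\notin[i,j]$ and otherwise moves the value at position $m$ to position $i+j-m$ while flipping its sign. So starting from $\sigma\in C_m^+$, the neighbors split as follows: those arising from reversals with $m\notin[i,j]$ remain in $C_m^+$, contributing $\binom{m}{2}+\binom{n-m+1}{2}$ neighbors; those with $m\in[i,j]$ and $i+j=2m$ (the ``palindromic'' reversals around $m$) give neighbors in $C_m^-$; and those with $m\in[i,j]$ and $i+j=m+m'\neq 2m$ give neighbors in $C_{m'}^-$. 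A direct count of pairs $(i,j)$ with $1\leq i\leq j\leq n$, $i\leq m\leq j$, and $i+j=m+m'$ gives exactly $\min(m,m',n-m+1,n-m'+1)$, which is the $(m,m')$ entry of the Chung--Tobin matrix $X$. There are no neighbors in $C_{m'}^+$ for $m'\neq m$, and the analogous statements from $C_m^-$ follow by the sign-negation automorphism of $\sr{n}$. Hence the partition $\{C_m^{\epsilon}\}$ is equitable.

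Ordering the cells as $(C_1^+,\dots,C_n^+,C_1^-,\dots,C_n^-)$, the quotient matrix takes the block form
\[
Q=\begin{pmatrix} D' & X \\ X & D' \end{pmatrix},
\]
where $D'$ is the diagonal matrix with $D'_{mm}=\binom{m}{2}+\binom{n-m+1}{2}$. Since row sums of $Q$ must equal the valency $\binom{n+1}{2}$ of $\sr{n}$, the diagonal matrix $D'$ is characterized precisely as in the statement. The orthogonal similarity by $\tfrac{1}{\sqrt{2}}\bigl(\begin{smallmatrix} I & I \\ I & -I \end{smallmatrix}\bigr)$ block-diagonalizes $Q$ to $\mathrm{diag}(D'+X,D'-X)$, so the eigenvalues of $Q$ are exactly those of $D'+X$ together with those of $D'-X$, all of which appear in the spectrum of $\sr{n}$.

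The only nontrivial step is the enumeration identifying the off-diagonal block with the Chung--Tobin matrix, and this is a routine case analysis (splitting $m'>m$, $m'<m$, $m'=m$) on the ranges of the reversal endpoints; everything else is bookkeeping and the standard spectral property of equitable partitions.
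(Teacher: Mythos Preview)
Your proof is correct and follows essentially the same strategy as the paper. The only cosmetic difference is that the paper's equitable partition for Part~(1) tracks the position and sign of the value~$n$ (sets $U_j(\pm)=\{\sigma:\sigma_j=n^{\pm}\}$) whereas you track the value~$1$; since relabelling values is an automorphism of $\sr{n}$, the two partitions yield the same quotient matrix $\left(\begin{smallmatrix} D' & X \\ X & D' \end{smallmatrix}\right)$, and your block-diagonalization via $\tfrac{1}{\sqrt{2}}\left(\begin{smallmatrix} I & I \\ I & -I \end{smallmatrix}\right)$ is just a slightly slicker packaging of the paper's eigenvector argument.
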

\begin{proof}
For $1\leq j\leq n$, let $U_{j}(+)=\{\sigma: \sigma_j=n^{+}\}$ and $U_{j}(-)=\{\sigma:\sigma_j=n^{-}\}$. We claim that the partition of the vertex set of $\sr{n}$ into the $2n$ sets $U_{1}(+),\dots,U_{n}(+),U_{1}(-),\dots,U_{n}(-)$ is an equitable partition whose quotient matrix is 
\begin{equation}
\begin{bmatrix} D' & X\\
X& D'
\end{bmatrix},
\end{equation}
where $X=X_n$ is the Chung-Tobin matrix from the previous paragraph and $D'$ is the unique diagonal matrix such that $D'+X$ has each row sum ${n+1\choose 2}$.

To see this, note that for any $1\leq i\neq j\leq n$, there are no edges between $U_{i}(+)$ and $U_{j}(+)$ because moving $n^{+}$ from position $i$ to position $j\neq i$ would require $n^{+}$ to change its sign. By a similar argument, we deduce that there are no edges between $U_{i}(-)$ and $U_{j}(-)$.

When $i<j$, for each vertex $u$ in $U_{i}(+)$ the number of neighbors of $u$ that are contained in $U_{j}(-)$ equals $\min(i,n-j+1)$. This follows because the only substring reversals that move the $n^+$ at index $i$ to the $n^-$ at index $j$ must reverse the  substring from index $i-x$ to $j+x$ for some $x \geq 0$. As $1 \leq i-x$ and $j+x \leq n$, there are just $\min(i,n-j+1)$ choices for $x$. As $i < j$, it follows that $\min(i,n-j+1) = \min(i,j,n-j+1,n-i+1)$ as required. Exactly the same argument applies when counting the number of neighbors of a vertex in $U_{i}(-)$ that are contained in $U_{j}(+)$. Next let $u$ be a vertex in $U_i(+)$, and consider the number of neighbors of $u$ in $U_i(-)$. These neighbors arise from $u$ by reversing a substring centered at position $i$, and there are $\min(i,n-i+1)$ such substrings. Since the graph $\sr{n}$ is regular of valency ${n+1\choose 2}$ we will get the entries on the diagonal such that each row sums to ${n+1\choose 2}$. 

From Chung and Tobin \cite{CT}, we get the eigenvalues of $D'+X$ as ${n+1\choose 2}-\lfloor \frac{k}{2}\rfloor n +2{\lfloor \frac{k}{2}\rfloor \choose 2}$ for $1\leq k\leq n$. If $w$ is an eigenvector of $D'+X$ with such eigenvalue $\mu$, then $\begin{bmatrix}w\\w\end{bmatrix}$ is an eigenvector of $\begin{bmatrix} D' & X\\
X& D'
\end{bmatrix}$ with eigenvalue $\mu$. By a similar argument, we can show that if $u$ is an eigenvector of $D'-X$ with eigenvalue $\theta$, then $\begin{bmatrix}u\\-u\end{bmatrix}$ is an eigenvector of $\begin{bmatrix} D' & X\\
X& D'
\end{bmatrix}$ with eigenvalue $\theta$. This determines $2n$ eigenvalues of the graph $\sr{n}$.

Another observation regarding the spectrum of $\sr{n}$ follows from Proposition \ref{equitablesrn1} and says that spectrum of $\sr{n}$ contains numbers of the form $\mu+n$, where $\mu$ is an eigenvalue of the reversal graph $R_n$. 
\end{proof}
Note that there is significant overlap between the two multisets of eigenvalues above. For example, ${n\choose 2}$ appears in both. We can observe this by noting that $e_i - e_{n+1-i}$, $1\leq i \leq \lfloor\frac{n}{2}\rfloor$ are eigenvectors of both $D'+X$ and $D'-X$, since they are orthogonal to $X$, and $D$ is persymmetric (i.e., symmetric about the principal back diagonal). The corresponding eigenvalues are the first $\lfloor\frac{n}{2}\rfloor$ diagonal entries of $D'$ and each of them (except the largest) is an eigenvalue of both $D'+X$ and $D'-X$.

\section{Open Problems}\label{sec:open}

In this paper, we studied the flip graph $\mathcal{M}(K_{2n})$ and the signed reversal graph $\sr{n}$. We improved some previous upper bounds for the chromatic number of $\mathcal{M}(K_{2n})$ and investigated the partition of its vertex set into signed reversal graphs and Cartesian products of signed reversal graphs. We also determined some properties of the signed reversal graphs such as part of their spectrum and some of their chromatic numbers for small order. There are several problems that are still open and we list them here.

\begin{enumerate}
\item Conjecture \ref{chiconj} is still open and it seems that its most difficult part is showing that 
$$\chi(\mathcal{M}(K_{2n}))\geq n+1.$$ 
This is open even for $n=5$. Proving that $\chi(\mathcal{M}(K_{2n}))$ is strictly increasing with $n$ would imply the above inequality.

\item A related open problem is investigating the independence number $\alpha(\mathcal{M}(K_{2n}))$ of the flip graph $\mathcal{M}(K_{2n})$. Our computations show that $\alpha(\mathcal{M}(K_8))=28$ and $\alpha(\mathcal{M}(K_{10}))\geq 208$. Using Hoffman's ratio bound, one can get the general, but weak bound of $\alpha(\mathcal{M}(K_{2n}))\leq \frac{(2n-1)!!}{3}$ for any $n\geq 2$.

\item The second largest eigenvalue of a regular graph is related to its connectivity and expansion properties and has been determined for various nice regular graphs including several Cayley graphs of the symmetric group (see \cite{CT,HHC} for example). Proposition \ref{eigsr} implies that ${n\choose 2}$ is an eigenvalue of $\sr{n}$ and our computations for $n\leq 5$ suggest the following conjecture.
\begin{conj}
For $n\geq 2$, ${n\choose 2}$ is the second largest eigenvalue of the signed reversal graph $\sr{n}$.
\end{conj}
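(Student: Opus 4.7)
The plan is to realize $\sr{n}$ as a Cayley graph on the hyperoctahedral group $\mathbb{Z}_2\wr S_n$ with connection set $S$ consisting of the ${n+1\choose 2}$ signed substring reversals, and then decompose its spectrum via Fourier analysis on the normal subgroup $(\mathbb{Z}_2)^n$. Each signed reversal is an involution, so $S$ is symmetric and left multiplication by $\mathbb{Z}_2\wr S_n$ acts on $\sr{n}$ by automorphisms. The valency ${n+1\choose 2}$ is the largest eigenvalue, and ${n\choose 2}$ is already known to be present by Proposition~\ref{eigsr}(2), so the entire content of the conjecture is that no eigenvalue of $\sr{n}$ lies strictly in the open interval $\bigl({n\choose 2},{n+1\choose 2}\bigr)$.

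Parameterize a signed permutation as a pair $(\alpha,\epsilon)$ with $\alpha\in S_n$ and $\epsilon\in\{\pm 1\}^n$. For each subset $T\subseteq\{1,\dots,n\}$, set $\chi_T(\epsilon):=\prod_{k\in T}\epsilon_k$ and $\hat f_T(\alpha):=\sum_\epsilon f(\alpha,\epsilon)\chi_T(\epsilon)$. Writing $\tau_{i,j}\in S_n$ for the reversal of positions $[i,j]$ and $\tau_{i,j}(T):=(T\setminus[i,j])\cup\tau_{i,j}(T\cap[i,j])$, a direct computation on the adjacency operator yields
\[
\widehat{(Af)}_T(\alpha)=(n-2|T|)\,\hat f_T(\alpha)+\sum_{1\le i<j\le n}(-1)^{|T\cap[i,j]|}\,\hat f_{\tau_{i,j}(T)}\bigl(\alpha\circ\tau_{i,j}\bigr).
\]
Since $|\tau_{i,j}(T)|=|T|$, the subspace indexed by $|T|=k$ is invariant for each $k\in\{0,1,\dots,n\}$, producing operators $M_0,M_1,\dots,M_n$ whose spectra together form $\operatorname{spec}(\sr{n})$. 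At $T=\emptyset$ one reads off $M_0=nI+A(R_n)$, which recovers Proposition~\ref{equitablesrn1}; by Chung--Tobin~\cite{CT}, the top two eigenvalues of $M_0$ are ${n+1\choose 2}$ and ${n\choose 2}$, so the bound is already tight on this block.

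The remaining task, and the main obstacle, is to show that $\lambda_{\max}(M_k)\le{n\choose 2}$ for every $k\ge 1$. The triangle-inequality bound on the signed-reversal part gives only $\lambda_{\max}(M_k)\le(n-2k)+{n\choose 2}$, which suffices when $k\ge n/2$ but is too weak for small positive $k$. The natural route is to exploit the right $S_n$-action $\alpha\mapsto\alpha\sigma$ that commutes with $M_k$: this further decomposes each $M_k$ into isotypic blocks indexed (via Clifford theory on $\mathbb{Z}_2\wr S_n$) by bipartitions $(\lambda,\mu)$ with $\lambda\vdash k$ and $\mu\vdash n-k$. On each block the signed-reversal operator is a twisted analogue of the Chung--Tobin matrix $D+X_n$, and the hope is that a refinement of Lemma~9 of~\cite{CT} combined with the Murnaghan--Nakayama rule for the hyperoctahedral group yields the desired bound ${n\choose 2}-n+2k$ on the spectral radius of each twisted block. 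Controlling the signs that arise when a substring reversal permutes isotypic blocks is the principal technical difficulty we anticipate, and it is quite possible that a genuinely new idea---for example, a covering of $\sr{n}$ by a more symmetric graph, or an explicit construction of eigenvectors via ribbon tableaux---will be needed to close the gap.
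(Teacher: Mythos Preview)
This statement is a \emph{conjecture} in the paper, listed in the open problems section; the paper offers no proof and explicitly flags it as unresolved. There is therefore no ``paper's own proof'' to compare your proposal against.

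Your write-up is not a proof either, and you say so yourself: after reducing to the block operators $M_k$ you observe that the triangle-inequality bound $\lambda_{\max}(M_k)\le (n-2k)+{n\choose 2}$ only handles $k\ge n/2$, and for small positive $k$ you express the \emph{hope} that a Clifford-theoretic refinement of Chung--Tobin's Lemma~9 combined with Murnaghan--Nakayama for $\mathbb{Z}_2\wr S_n$ will work, while conceding that ``a genuinely new idea'' may be needed. That is an honest assessment of a research outline, not a proof. The decomposition you set up (Fourier transform over the normal subgroup $(\mathbb{Z}_2)^n$, yielding $A$-invariant pieces indexed by $|T|=k$) is sound and does recover the $k=0$ block as $nI+A(R_n)$, matching the paper's Proposition~\ref{equitablesrn1}. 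But the heart of the conjecture is precisely the bound on the $k\ge 1$ blocks, and that is exactly where your argument stops.

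In short: your proposal correctly identifies a plausible line of attack and the location of the real difficulty, but it does not close the gap, and neither does the paper---the statement remains open.
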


\item Our computational results determining (or bounding) the chromatic number of $\sr{n}$ for small $n$ are shown in Table~\ref{chromaticsrn}. What is the behavior of this chromatic number as $n$ increases?


\end{enumerate}

\begin{appendices}

\section{$\chi(\sr5)\leq 4$}

We list below the $4$-coloring of $\sr5$ obtained from our code. For a permutation $\pi=(\pi_1\dots\pi_k)\in S_k$ we denote by $\pi_1\dots \pi_k^{0}$ (or $\pi_1\dots \pi_k^{1}$) the subset of vertices of $\sr5$ whose underlying permutation is $(\pi_1\dots \pi_k)$ that have an even (or odd) number of $+$s. Equivalently, these are the color classes of the $k$-dimensional cube induced in $\sr{k}$ by $V_{\pi}$.\\

Color 0: [$52143^{0}$, $12345^{0}$, $43251^{0}$, $42315^{0}$, $34521^{0}$, $13452^{0}$, $13254^{0}$, $35412^{0}$, $21435^{0}$, $25413^{0}$, $24531^{0}$, $42153^{0}$, $54123^{0}$, $45213^{0}$, $51432^{0}$, $35214^{0}$, $31524^{0}$, $34215^{0}$, $31245^{0}$, $32541^{0}$, $15324^{0}$, $54231^{0}$, $51234^{0}$, $41235^{0}$, $23451^{0}$, $14325^{0}$, $53421^{0}$, $12453^{0}$, $52413^{1}$, $41523^{1}$, $51423^{1}$, $32154^{1}$, $54312^{1}$, $51342^{1}$, $24513^{1}$, $13425^{1}$, $35142^{1}$, $23145^{1}$, $52314^{1}$, $43125^{1}$, $12534^{1}$, $14532^{1}$, $24351^{1}$, $21543^{1}$, $25341^{1}$, $45321^{1}$, $34152^{1}$, $42135^{1}$, $13524^{1}$, $45132^{1}$, $25134^{1}$, $14235^{1}$, $23514^{1}$, $43512^{1}$, $41352^{1}$, $21354^{1}$, $34251^{1}$, $35241^{1}$]\\

Color 1:   [$54213^{1}$, $24153^{0}$,  $14523^{1}$,  $45123^{0}$,  $12354^{0}$,  $51243^{1}$,  $54321^{1}$,  $53412^{1}$,  $41253^{1}$,  $51324^{1}$,  
$42513^{1}$,  $32415^{0}$,  $15342^{0}$,  $13245^{1}$,  $32145^{1}$,  $31542^{0}$,  $31425^{0}$,  $54132^{0}$,  $41325^{0}$,  $53214^{1}$,  
$42351^{1}$,  $23541^{0}$,  $15432^{1}$,  $34125^{1}$,  $25143^{0}$,  $43521^{0}$,  $15234^{1}$,  $45231^{1}$,  $14352^{0}$,  $31254^{1}$,  
$52134^{0}$,  $23415^{1}$,  $35124^{1}$,  $52341^{1}$,  $12543^{1}$,  $43215^{1}$,  $25431^{1}$,  $21345^{0}$,  $31452^{1}$,  $21453^{1}$,  
$32514^{0}$,  $24315^{0}$,  $52431^{0}$,  $34512^{0}$,  $53124^{0}$,  $13542^{1}$,  $42531^{0}$,  $23154^{0}$,  $43152^{0}$,  $12435^{1}$,  
$53241^{0}$,  $21534^{0}$,  $41532^{0}$,  $25314^{0}$,  $45312^{0}$,  $15243^{0}$,  $14253^{0}$,  $35421^{1}$]   \\
   
Color 2: [$15423^{0}$,  $52314^{0}$,  $42513^{0}$,  $51243^{0}$,  $14253^{1}$,  
$54231^{1}$,  $54123^{1}$,  $41523^{0}$,  $45213^{1}$,  $51234^{1}$,  
$15243^{1}$,  $54213^{0}$,  $52143^{1}$,  $25413^{1}$,  $12534^{0}$,  
$35412^{1}$,  $51324^{0}$,  $21543^{0}$,  $31425^{1}$,  $32451^{0}$,  
$15342^{1}$,  $14235^{0}$,  $31542^{1}$,  $24153^{1}$,  $13425^{0}$,  
$23145^{0}$,  $43125^{0}$,  $45321^{0}$,  $41235^{1}$,  $12345^{1}$,  
$14325^{1}$,  $43251^{1}$,  $13254^{1}$,  $34521^{1}$,  $42351^{0}$,  
$24315^{1}$,  $25341^{0}$,  $35214^{1}$,  $35421^{0}$,  $32154^{0}$,  
$35124^{0}$,  $14532^{0}$,  $21435^{1}$,  $23451^{1}$,  $13452^{1}$,  
$32541^{1}$,  $51432^{1}$,  $34152^{0}$,  $53421^{1}$,  $54312^{0}$,  
$32415^{1}$,  $24135^{0}$,  $13542^{0}$,  $53124^{1}$,  $21534^{1}$,  
$41532^{1}$,  $25314^{1}$,  $45312^{1}$,  $42531^{1}$,  $45132^{0}$,  
$25134^{0}$,  $53142^{0}$]\\

Color 3: [$24513^{0}$,  $32451^{1}$,  $45123^{1}$,  $51423^{0}$,  $15423^{1}$,
$15432^{0}$,  $32514^{1}$,  $12453^{1}$,  $53214^{0}$,  $52341^{0}$,  
$12543^{0}$,  $25143^{1}$,  $21453^{0}$,  $41253^{0}$,  $51342^{0}$,  
$34251^{0}$,  $43215^{0}$,  $52413^{0}$,  $15324^{1}$,  $12435^{0}$,  
$31245^{1}$,  $31452^{0}$,  $34512^{1}$,  $54132^{1}$,  $54321^{0}$,  
$35241^{0}$,  $35142^{0}$,  $42153^{1}$,  $31524^{1}$,  $34125^{0}$,  
$13245^{0}$,  $41325^{1}$,  $53241^{1}$,  $15234^{0}$,  $23415^{0}$,  
$34215^{1}$,  $14352^{1}$,  $52134^{1}$,  $42315^{1}$,  $53412^{0}$,  
$31254^{0}$,  $52431^{1}$,  $21345^{1}$,  $23541^{1}$,  $14523^{0}$,  
$45231^{0}$,  $12354^{1}$,  $32145^{0}$,  $24351^{0}$,  $43521^{1}$,  
$25431^{0}$,  $53142^{1}$,  $24531^{1}$,  $24135^{1}$,  $13524^{0}$,  
$42135^{0}$,  $23514^{0}$,  $43512^{0}$,  $21354^{0}$,  $41352^{0}$,  
$23154^{1}$,  $43152^{1}$]

\end{appendices}

\section*{Acknowledgments} We are very grateful to the anonymous referees, Ferdinand Ihringer, Nathan Lindzey, Chia-an Liu, Jack Koolen and Josh Tobin for their comments and suggestions.

\end{document}